\documentclass[11pt]{article}

\usepackage[utf8]{inputenc}
\usepackage[T1]{fontenc}
\usepackage{amsmath, amsfonts, amssymb, amsthm}
\usepackage{graphicx,xcolor} 
\usepackage{subcaption}
\usepackage{url}

\newtheorem{theorem}{Theorem}[section]
\newtheorem{lemma}[theorem]{Lemma}
\newtheorem{remark}[theorem]{Remark}
\newtheorem{definition}[theorem]{Definition}
\newtheorem{claim}{Claim}

\begin{document}

\title{Cyclically $5$-edge-connected snarks with resistance~$2$ and flow resistance $n$}

\author{Davide Mattiolo${}^\dagger$, Pietro Negrini${}^{\dagger\ddagger}$, Silvia M.C. Pagani${}^\ddagger$\\[5pt]{\footnotesize
  ${}^\dagger$Department of Computer Science, KU Leuven Kulak, 8500 Kortrijk, Belgium} \\[5pt] {\footnotesize
  ${}^\ddagger$Department of Mathematics and Physics, Universit\`a Cattolica del Sacro Cuore,} \\ {\footnotesize 25133 Brescia, Italy}\\[5pt]
Emails: \texttt{davide.mattiolo@kuleuven.be},
\texttt{pietro.negrini@unicatt.it},\\
  \texttt{silvia.pagani@unicatt.it}
}
\maketitle

\begin{abstract}
Snarks are $2$-connected cubic graphs that do not admit a proper $3$-edge-coloring. 
For a cubic graph $G$, its resistance $r(G)$ is the minimum number of edges whose removal results in a $3$-edge-colorable graph, while its flow resistance $r_f(G)$ is the minimum number of edges whose removal results in a graph admitting a nowhere-zero $\mathbb{Z}_2 \times \mathbb{Z}_2$-flow.
In this paper, we provide an affirmative answer to a question recently posed by Allie, M\'a\v cajov\'a, and \v Skoviera by constructing a family of cyclically $5$-edge-connected snarks for which the ratio $r_f(G)/r(G)$ is arbitrarily large.
\end{abstract}

{\small 
\textbf{Keywords:} Edge-coloring, flow-resistance, nowhere-zero flow, resistance, snark.

\textbf{MSC:} 05C15 (Coloring of graphs and hypergraphs), 05C21 (Flows in graphs), 05C40 (Connectivity)}

\section{Introduction}

A \emph{snark} is a $2$-connected cubic (i.e.\ $3$-regular) graph not admitting a proper $3$-edge-coloring. 
Over the past decades, snarks attracted considerable interest primarily due to the fact that the study of many important conjectures, such as the Cycle Double Cover Conjecture \cite{PDseymour,Szekeres1973PolyhedralDO}, the 5-Flow Conjecture \cite{Tutte1954ACT}, and the Berge-Fulkerson Conjecture \cite{Fulkerson1971BlockingAA}, can be reduced to this subclass of graphs. It is indeed well-known that a smallest counterexample to such conjectures, if it exists, must be a snark. Therefore, investigating the structure of snarks is a central part of the study of these conjectures.

In order to better understand snarks, several authors introduced parameters measuring how far snarks are from being $3$-edge-colorable, see \cite{fms18}. 
In this paper we are interested in the interplay between two such parameters, namely, the resistance and the flow resistance whose definitions we recall below.
 
For a set $X\subseteq V(G)$ of a graph $G=(V(G),E(G))$, we define $\partial_G(X)$ to be the edge-cut induced by $X$, i.e.\ the set of edges of $G$ with exactly one end in $X$. 
An edge-cut $\partial_G(X)$ is \textit{cyclic} if both the subgraphs induced by $X$ and $V(G)\setminus X$ contain a cycle. A graph $G$ is called \textit{cyclically $k$-edge-connected} if it does not contain any cyclic edge-cut of cardinality less than $k$.

A \emph{proper $3$-edge-coloring} of a graph $G$ is a map $c\colon E(G)\to S$, for a suitable set $S$, with $|S|=3$, such that $c(e)\ne c(e')$ for every pair of adjacent edges $e,e'$ in $G$. A graph $G$ is called \emph{$3$-edge-colorable} if it admits a proper $3$-edge-coloring. The \textit{resistance} $r(G)$ \cite{steffen98, steffen04} of a cubic graph $G$ is the minimum number of edges whose removal leaves a $3$-edge-colorable graph.

Let $A$ be an additive abelian group with identity $0$. An \textit{$A$-flow} on a graph $G$ is a pair $(D,f)$, where $D$ is an orientation of $G$ and $f\colon E(G) \to A$ satisfies, for every vertex $v \in V(G), \sum_{e \in \partial^-(v)} f(e)= \sum_{e \in \partial^+(v)} f(e).$

The flow $(D,f)$ is called \emph{nowhere-zero} (or an $A$-NZF) if $f(e)\ne 0$ for every $e\in E(G).$



The \textit{flow resistance} \cite{fms18} of a cubic graph $G$, denoted by $r_f(G)$, is the smallest number of edges whose removal results in a graph admitting a nowhere-zero $\mathbb{Z}_2 \times \mathbb{Z}_2$-flow.


It is well-known that a $\mathbb{Z}_2 \times \mathbb{Z}_2$-NZF on a cubic graph $G$ defines a proper $3$-edge-coloring on $G$ and vice versa. Therefore, we will denote the three non-zero elements of the group $\mathbb{Z}_2 \times \mathbb{Z}_2$ by $a,b$ and $c$, and we will use them as colors in any $3$-edge-coloring we will define later in this article.

Thus,  
both $r(G)$ and $r_f(G)$ are zero if and only if $G$ is $3$-edge-colorable, and positive otherwise. In particular, it is well-known that $r(G)\ge2$ for every non-$3$-edge-colorable cubic graph $G$.
Despite their close relationship, $r(G)$ and $r_f(G)$ can diverge significantly. A conjecture by Fiol, Mazzuoccolo and Steffen~\cite{fms18} suggested that $r_f(G) \leq r(G)$ for every bridgeless cubic graph $G$. This was disproved in \cite{ams22} by providing a family of snarks satisfying $r_f(G) = 2r(G)$. More recently, Allie, M\' a\v cajov\'a, and \v Skoviera \cite{ams24} proved that the ratio $\frac{r_f(G)}{r(G)}$ can be arbitrarily large for cyclically $4$-edge-connected snarks and they asked whether this is also true under higher connectivity requirements.
The following theorem is the main result of this paper and answers in the positive to their problem.

\begin{theorem}\label{thm:main}
For every integer $n \geq 1$, there exists a cyclically $5$-edge-connected snark $H_n$ of order $40n+2$ such that $r(H_n) = 2$ and $r_f(H_n) = n$.
\end{theorem}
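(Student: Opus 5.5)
We construct $H_n$ explicitly from $n$ copies of a fixed gadget $B$ on $40$ vertices, plus $2$ extra vertices, and then verify four properties: cubicity with cyclic $5$-edge-connectivity, non-colorability, $r(H_n)=2$, and $r_f(H_n)=n$. The gadget $B$ will be a Petersen-based (or other small snark-based) piece with a bounded number of dangling edges. We choose it so that it has two properties. First, $B$ admits no boundary-compatible $3$-edge-coloring, i.e.\ it is "uncolourable" with respect to the flow on its boundary. Second, deleting a single edge inside $B$ makes it colourable with any prescribed admissible boundary flow. The copies $B_1,\dots,B_n$ are arranged in a ring, each joined to its neighbours by $2$ or $3$ edges. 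Both extra vertices are adjacent to all copies through connector edges. These connectors are what allow a "global" colouring defect to be absorbed. Cyclic $5$-connectivity will be checked by a standard cut-counting argument. Any cyclic cut either lies inside a copy, where we use the connectivity of the base snark, or separates a union of consecutive copies, which it must cut in at least two places along the ring plus through the connectors to the extra vertices. This yields at least $5$ edges.

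For $r(H_n)=2$: since $H_n$ will be a snark, $r\ge 2$ is automatic. For the upper bound we exhibit two edges, incident with the two extra vertices, whose removal allows a colouring. Deleting them turns the two extra vertices into degree-$2$ vertices. Each gadget can then be coloured so that its boundary is \emph{not} a flow-compatible assignment: the parity (Kirchhoff) condition around each copy fails, and this is allowed for colourings. We then propagate the colouring around the ring, and the two degree-$2$ vertices absorb the accumulated mismatch. This is the key feature: a proper colouring of $G-F$ only needs properness at each vertex, while a nowhere-zero flow of $G-F$ requires that the edges of $F$ carry zero. That is, the colouring of $G$ restricted to $G-F$ must satisfy $\sum=0$ at degree-$2$ vertices, forcing equal colours there.

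For $r_f(H_n)\ge n$ we use a parity/flow argument per gadget. Let $F$ be a set with $H_n-F$ admitting a nowhere-zero $\mathbb Z_2\times\mathbb Z_2$-flow. Such a flow is a flow of $H_n$ vanishing exactly on $F$. Restricting to the cut $\partial(B_i)$, the flow values sum to $0$. The design of $B$ guarantees that no flow which is nowhere-zero on $E(B_i)\cup\partial(B_i)$ and satisfies the sum condition exists. The gadget is chosen so that every such boundary state is uncolourable, for instance by taking $B$ to be a snark with a vertex or edge cut-open structure such that every flow-compatible boundary extends badly. Therefore $F$ meets $E(B_i)\cup\partial(B_i)$ for each $i$. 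We must still avoid double counting on shared connector edges. To do so we arrange that the edges in $\partial(B_i)\cap\partial(B_j)$ together with the connector edges still force a separate edge per copy; the refined statement is "$F\cap E(B_i)\neq\emptyset$ or two boundary edges of $B_i$ lie in $F$". A charging argument then gives $|F|\ge n$. The upper bound $r_f\le n$ follows by deleting one suitable interior edge in each $B_i$ and gluing flows, since flow-compatible boundaries compose along the ring.

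The main obstacle is the choice of $B$ together with the exact local analysis: the boundary colouring classes of $B$ must be characterised, and the simultaneous lower bound $r_f\ge n$ via charging must be shown to survive deletions on shared edges. I would first try $B$ as two Petersen graphs with vertices removed, joined into a $20$-vertex-per-side block, and compute its colouring classes by the Parity Lemma case analysis.
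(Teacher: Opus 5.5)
\textbf{There is a genuine gap, and it is structural.} You never construct $B$, and the properties you ask of it are incompatible with $r(H_n)=2$ in a ring of disjoint copies.

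By the Parity Lemma, every proper $3$-edge-colouring of a cubic semi-graph has boundary values summing to $0$ in $\mathbb{Z}_2\times\mathbb{Z}_2$. So ``$B$ admits no boundary-compatible colouring'' just means $B$ is not $3$-edge-colourable. Then every $F$ with $H_n-F$ colourable must meet $E(B_i)\cup\partial(B_i)$ for each $i$. An edge lies in at most two of these sets, so $r(H_n)\ge\lceil n/2\rceil$, and your lower-bound design for $r_f$ destroys $r(H_n)=2$.

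The same fact breaks your upper-bound step. If the two deleted edges are at the extra vertices, every $B_i$ stays an intact cubic semi-graph in $H_n-F$. Its boundary is therefore necessarily Kirchhoff, and no mismatch can be carried around the ring. In general, in a proper colouring of $H_n-F$ the sum over $\partial(X)$ is determined by the vertices of degree less than $3$ inside $X$. With $|F|=2$, only cuts that separate the defects can violate the Kirchhoff condition.

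There is also a counting problem: a cubic vertex meets at most three copies, so ``both extra vertices are adjacent to all copies'' fails once $n>3$.

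\textbf{The missing idea.} The $n$ cuts responsible for the $n$ flow-zeros must all separate the same two defects. So they must be nested, not arranged in a ring, and the pieces between consecutive cuts must themselves be colourable.

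The paper builds $5$-poles $Y_1\subset Y_2\subset\cdots\subset Y_n$, with the conflict vertex $v$ inside $Y_1$ and the vertex $x$ outside $Y_n$. $Y_1$ is uncolourable, which gives the first zero. For $i\ge 2$, the layer $Y_i\setminus Y_{i-1}$ consists of two colourable copies of $Z$ plus six vertices. Lemma~\ref{lem:properties_Z} forces equal values on $z_1,z_2$ in every proper colouring of $Z$. Hence a flow with no zero on the layer would satisfy $\phi(\alpha_i)=\phi(\beta_i)$ and $\phi(\gamma_i)=\phi(\delta_i)$. Remark~\ref{rmk:flow-edge-cut} applied to $\partial(Y_i)$ then gives $\phi(\epsilon_i)=0$, a contradiction. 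So each of the $n$ disjoint layers carries a zero, and $r_f\ge n$.

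Colourings, by contrast, need only the two conflicts at $v$ and $x$. Every $\partial(Y_i)$ separates $v$ from $x$, so it may carry the non-Kirchhoff boundary $c,c,c,c,a$.

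Your charging argument and your ring cut-counting for cyclic $5$-connectivity cannot be checked without a concrete graph. The paper handles connectivity by showing $H_{n+1}\in H_n\ast J$ and applying Lemma~\ref{lem:main} inductively.
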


\section{Preliminaries}

In this section, we establish the basic definitions required for our construction. A \textit{semi-graph} $G = (V(G), E(G))$ consists of a finite set of vertices $V(G)$ and a multiset $E(G)$ of edges and semi-edges. An \textit{edge} is an unordered pair of distinct vertices $\{u, v\}$, denoted by $uv$, while a \textit{semi-edge} is a singleton $\{u\}$, denoted by $(u)$, representing an edge incident to only one vertex.
If $E(G)$ contains no semi-edges, then $G$ is a \textit{graph}.

We say that we \textit{join} two vertices $u, v \in V(G)$ by adding the edge $uv$ to $E(G)$. Similarly, we \textit{merge} two semi-edges $(u)$ and $(v)$ by removing them from $E(G)$ and adding the edge $uv$. Conversely, we define the operation of \textit{trimming} an edge $uv$ as removing the edge $uv$ and adding the two semi-edges $(u)$ and $(v)$. The \textit{removal} of a vertex $v$ adjacent to $u_1, u_2, \dots, u_n$ involves deleting $v$ and all incident edges $vu_i$; we denote the resulting semi-graph by $G-v$. More generally, for any subset of vertices $X$ of a semi-graph $G$, we denote by $G-X$ the semi-graph obtained by removing the vertices of $X$ one by one. Similarly, for any edge $e \in E(G)$, the notation $G-e$ represents the semi-graph obtained by removing the edge $e$ from $E(G)$. 

Let $G$ be a semi-graph. The \textit{degree} of a vertex $v$ in $G$, denoted by $\deg_G(v)$, is defined as the number of edges and semi-edges incident to it; $G$ is \textit{cubic} if every vertex has degree $3$.
We say that a semi-graph $G'$ is contained in $G$ if: (i) $V(G') \subseteq V(G)$, (ii) every edge $uv \in E(G')$ belongs to $E(G)$, and (iii) for every $v\in V(G')$, $\deg_{G'}(v)\le \deg_G(v)$. The \textit{distance} $d_G(u,v)$ between two vertices $u$ and $v$ in $G$ is defined as the number of edges in a shortest path connecting them. Similarly, we define the distance between a vertex $v$ and an edge $e = xy$ as the minimum of the distances from $v$ to the endpoints of~$e$.

The classical concepts of edge coloring and nowhere-zero flows for graphs extend naturally to semi-graphs. We recall that, given an edge-coloring of $G$, if $a$ and $b$ are two distinct colors, an $(a,b)$-\emph{Kempe chain} is a path in $G$ whose edges are colored alternately with colors $a$ and $b$. 
We recall here the well-known Parity Lemma in terms of semi-graphs. As a straightforward extension of the previous definition of edge-cut, when $G$ is a semi-graph and $X\subseteq V(G)$, $\partial_G(X)$ denotes the set of edges and semi-edges with exactly one endvertex in $X$.

\begin{lemma}[Parity Lemma \cite{descartes1948network}] \label{lem:parity}

Let $G$ be a cubic semi-graph and $f \colon E(G)\to S$ a proper $3$-edge-coloring of $G$. Then, for every $X\subseteq V(G)$ and $x\in S$, $|f^{-1}(x) |\equiv |\partial_G(X)|\mod{2}$.

\end{lemma}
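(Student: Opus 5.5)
The statement to prove is the Parity Lemma for cubic semi-graphs. I will use a double-counting argument on the vertices of $X$, treating semi-edges exactly like edges whose other end lies outside $X$.

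Fix $X\subseteq V(G)$ and a color $x\in S$. Since $f$ is a proper $3$-edge-coloring of a cubic semi-graph, each vertex $v$ has exactly three incident edges or semi-edges, and they receive three distinct colors. As $|S|=3$, every vertex $v\in X$ is incident with exactly one element of $f^{-1}(x)$. Summing over $X$ gives
\[
|X| \;=\; 2\,|f^{-1}(x)\cap E(G[X])| \;+\; |f^{-1}(x)\cap \partial_G(X)|,
\]
because an edge of color $x$ with both ends in $X$ is counted twice, while an edge or semi-edge in the cut is counted once. Here the statement's $|f^{-1}(x)|$ is read as $|f^{-1}(x)\cap\partial_G(X)|$, the number of cut elements of color $x$. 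Hence $|f^{-1}(x)\cap \partial_G(X)|\equiv |X| \pmod 2$.

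The same count with all three colors together gives $3|X| = 2|E(G[X])| + |\partial_G(X)|$, so $|\partial_G(X)|\equiv |X| \pmod 2$. Combining the two congruences yields $|f^{-1}(x)\cap\partial_G(X)|\equiv |\partial_G(X)|\pmod 2$, which is the claim.

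No step is a real obstacle. The only points needing care are that semi-edges contribute once to the incidence count, consistent with $\partial_G(X)$ including semi-edges at vertices of $X$, and that properness with exactly three colors forces each color to appear exactly once at every vertex.
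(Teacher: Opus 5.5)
Your double-counting argument is correct. It is the standard proof: each vertex of $X$ meets exactly one element of colour $x$, semi-edges at vertices of $X$ are counted once, and $3|X|\equiv|X|$ gives the parity of the cut. The paper gives no proof of its own and simply cites the lemma, so there is no alternative route to compare with.

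You were right to read $|f^{-1}(x)|$ as $|f^{-1}(x)\cap\partial_G(X)|$. Taken literally the statement is self-contradictory: $X=\emptyset$ and $X=\{v\}$ give cuts of sizes $0$ and $3$. The intersected version is also the form the paper actually uses later, for instance with $X=V(M)$ in Lemma~\ref{lem:properties_M}$(ii)$.
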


The Parity Lemma naturally extends to flows on edge-cuts, leading to the following remark:

\begin{remark} \label{rmk:flow-edge-cut}

Let $\phi$ be a $\mathbb{Z}_2 \times \mathbb{Z}_2$-flow on a semi-graph $G$ and $X\subseteq V(G)$. Then, $\sum_{e \in \partial_G(X)}\phi(e) = 0$.
\end{remark}

\section{Construction of $H_n$}

We define the family of snarks $\{H_n\}_{n \in \mathbb{Z^+}}$ mentioned in Theorem~\ref{thm:main} in a recursive way. To achieve this, we first construct the semi-graph $Y_n$, which is in turn built from the semi-graph $Y_{n-1}$ and two copies of the semi-graph $Z$. To define $Z$, we first consider two specific semi-graphs, $M$ and $N$.

The semi-graph $M$ is obtained from the Petersen graph (which will be denoted by $P$) by removing the three vertices of a path of length two. We denote the five semi-edges of $M$ as $e_1, \dots, e_5$, as shown in Figure~\ref{poloM}. The semi-graph $N$ is instead constructed by removing a single vertex $v$ from $P$ and trimming an edge $e$ of its, such that $d_P(v,e)=2$. Due to the symmetry of the Petersen graph, this operation yields, up to isomorphism, the semi-graph $N$ shown in Figure \ref{poloN}, with the five semi-edges denoted as $f_1, \dots, f_5$.

\begin{figure}[htbp]
     \centering
     \begin{subfigure}[b]{0.48\textwidth}
         \centering
         \includegraphics[width=\textwidth]{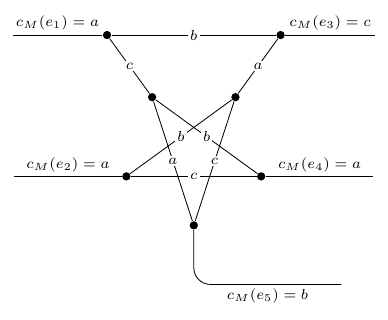}
         \caption{A proper $3$-edge-coloring $c_M$ on the semi-graph $M$.}
         \label{poloM}
     \end{subfigure}
     \hfill
     \begin{subfigure}[b]{0.48\textwidth}
         \centering
         \includegraphics[width=\textwidth]{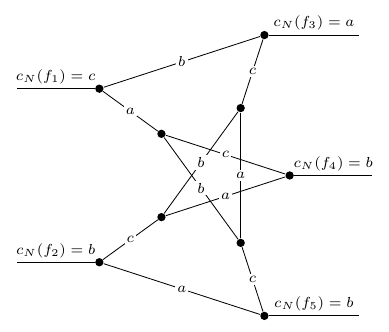} 
         \caption{A proper $3$-edge-coloring $c_N$ on the semi-graph $N$.}
         \label{poloN}
     \end{subfigure}
     \caption{Semi-graphs obtained from the Petersen graph.}
     
     \label{fig:semigraphs}
\end{figure}

\begin{lemma} \label{lem:properties_M}
The following properties hold for the semi-graph $M$:
\begin{enumerate}
    \item[$(i)$] $r(M) = 0$;
    \item[$(ii)$] for every $3$-edge-coloring $c_M$ of $M$ we have that either $c_M(e_1)=c_M(e_2)$ or $c_M(e_3)=c_M(e_4)$. 
\end{enumerate}
\end{lemma}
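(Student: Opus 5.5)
The plan for $(i)$ is to exhibit a coloring, and for $(ii)$ to argue by contradiction: if both pairs of semi-edges receive distinct colors, the coloring extends to the whole Petersen graph. Throughout I assume the labelling of Figure~\ref{poloM}. Write the removed path as $x y z$. Then $x$ and $z$ each had two further neighbours in $M$, and $y$ had one. I take $e_1,e_2$ to be the semi-edges formerly joined to $x$, $e_3,e_4$ those formerly joined to $z$, and $e_5$ the one formerly joined to $y$. This labelling should be confirmed against the figure before writing the proof.

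For $(i)$ it suffices to exhibit a proper $3$-edge-coloring of $M$, namely the coloring $c_M$ drawn in Figure~\ref{poloM}. Checking that it is proper at each of the seven vertices is routine, so $r(M)=0$.

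For $(ii)$, suppose some proper $3$-edge-coloring $c$ of $M$, with colors $a,b,c\in\mathbb{Z}_2\times\mathbb{Z}_2$, has $c(e_1)\neq c(e_2)$ and $c(e_3)\neq c(e_4)$. Since a proper $3$-edge-coloring of a cubic semi-graph is the same as a $\mathbb{Z}_2\times\mathbb{Z}_2$-NZF, Remark~\ref{rmk:flow-edge-cut} applied with $X=V(M)$ gives
\[
c(e_1)+c(e_2)+c(e_3)+c(e_4)+c(e_5)=0 .
\]
Now reinsert the three vertices. Give the edge $xy$ the color $\bar x:=c(e_1)+c(e_2)$. Because $c(e_1)\neq c(e_2)$ are both nonzero, $\bar x$ is nonzero and is the third color, so the coloring is proper at $x$. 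Likewise give $zy$ the color $\bar z:=c(e_3)+c(e_4)$, which is proper at $z$. The relation above then reads $\bar x+\bar z+c(e_5)=0$.

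It remains to check vertex $y$, which needs $\bar x$, $\bar z$, $c(e_5)$ pairwise distinct. These are three nonzero elements of $\mathbb{Z}_2\times\mathbb{Z}_2$ summing to $0$. If two of them were equal, their sum would be $0$, forcing the third to be $0$, which is impossible. Hence they are the three distinct nonzero elements, and the extended coloring is proper at $y$ as well. This yields a proper $3$-edge-coloring of the Petersen graph $P$, contradicting the fact that $P$ is a snark. So every $3$-edge-coloring of $M$ satisfies $c_M(e_1)=c_M(e_2)$ or $c_M(e_3)=c_M(e_4)$.

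The only delicate point is the identification of the semi-edges with the endpoints of the removed path. With the wrong pairing the extension argument fails, so this must be checked against the figure. Once it is fixed, the group-sum argument closes every case at once, with no case analysis on the colors.
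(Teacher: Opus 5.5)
Your proof is correct and follows the paper's argument. You assume both pairs receive distinct colors, reinsert the removed path, and extend the coloring to a proper $3$-edge-coloring of the Petersen graph; your labelling of $e_1,\dots,e_5$ matches the one the paper's proof implicitly uses, and your derivation of $\bar x+\bar z+c(e_5)=0$ via Remark~\ref{rmk:flow-edge-cut} is a cleaner, case-free version of the paper's Parity Lemma step that also makes the check at the middle vertex explicit (as a minor point, you reuse $c$ for both the coloring and a color).
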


\begin{proof}
$(i)$ See Figure \ref{poloM} for a proper $3$-edge-coloring of $M$.

$(ii)$ First, by the Parity Lemma, it follows that three edges of the set $\{e_1, e_2, e_3, e_4, e_5\}$ must receive the same color, while the remaining two must receive the other two distinct colors. 

Suppose, by contradiction, that both edges of the pairs $\{e_1, e_2\}$ and $\{e_3, e_4\}$ receive different colors. In this case, one edge from the first pair, one from the second pair, and the edge $e_5$ would necessarily be assigned the same color. However, by joining $e_1$ and $e_2$ into a new vertex, and $e_3$ and $e_4$ into another new vertex, the $3$-edge-coloring of $M$ could be extended to a proper $3$-edge-coloring of the Petersen graph, a contradiction.
\end{proof}

\begin{lemma} \label{lem:properties_N}
The following properties hold for the semi-graph $N$:
\begin{enumerate}
    \item[$(i)$] $r(N) = 0$;
    \item[$(ii)$] for any proper $3$-edge-coloring $c_N$ of $N$, $c_N(f_1)\ne c_N(f_2)$. 
\end{enumerate}
\end{lemma}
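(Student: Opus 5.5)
For part $(i)$ I would simply exhibit a coloring: the coloring $c_N$ in Figure~\ref{poloN} is a proper $3$-edge-coloring of $N$, so $r(N)=0$. The substance is in part $(ii)$. As in the proof of Lemma~\ref{lem:properties_M}$(ii)$, I would argue by contradiction: assuming $c_N(f_1)=c_N(f_2)$, I would rebuild the Petersen graph $P$ from $N$ and extend $c_N$ to a proper $3$-edge-coloring of $P$, which is impossible. The plan relies on reading from Figure~\ref{poloN} that $f_1$ and $f_2$ are the two semi-edges created by trimming the edge $e=xy$, and that $f_3,f_4,f_5$ are the three semi-edges left at the neighbours of the removed vertex $v$.

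First I would pin down the color pattern on the five semi-edges. Apply the Parity Lemma (Lemma~\ref{lem:parity}) with $X=V(N)$, so that $\partial_N(X)=\{f_1,\dots,f_5\}$. Each color then appears an odd number of times among the five semi-edges. Since $5=3+1+1$ is the only decomposition of $5$ into three odd summands, one color $\alpha$ appears exactly three times and each of the other two colors appears exactly once.

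Now suppose $c_N(f_1)=c_N(f_2)=\beta$. A color used exactly once cannot occur on two semi-edges, so $\beta=\alpha$. Hence the remaining semi-edges $f_3,f_4,f_5$ carry $\alpha$ once and each of the other two colors once, so they receive three distinct colors. I would then do two steps.
\begin{enumerate}
    \item Merge $f_1$ and $f_2$ back into the edge $xy$ and color it $\alpha$. This keeps the coloring proper at $x$ and at $y$, because the colors seen at each of these vertices are unchanged.
    \item Reinsert $v$ by merging $f_3,f_4,f_5$ into three edges at $v$. This is proper at $v$ because these three edges have distinct colors.
\end{enumerate}
The result is a proper $3$-edge-coloring of $P$, contradicting the fact that $P$ is a snark.

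The only delicate point is the identification of $f_1,f_2$ with the trimmed pair, since the figure is not reproduced here. If instead $f_1,f_2$ were two of the semi-edges at the neighbours of $v$, the same reconstruction idea would apply with the roles of the two groups exchanged. However, the argument above would then need to be redone, and the color count might not force the contradiction. So the main step to verify is the labeling in Figure~\ref{poloN}. Once it is fixed, the combination of the Parity Lemma and the reconstruction of $P$ is routine.
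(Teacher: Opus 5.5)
Your proposal is correct and matches the paper's proof. Parity forces the $3+1+1$ pattern, so $c_N(f_1)=c_N(f_2)$ leaves $f_3,f_4,f_5$ with three distinct colors, and merging $f_1,f_2$ while joining $f_3,f_4,f_5$ at a new vertex yields a $3$-edge-coloring of the Petersen graph. Your labeling worry is also settled by the paper: its proof performs exactly this reconstruction, so $f_1,f_2$ are indeed the two halves of the trimmed edge $e$.
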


\begin{proof}
$(i)$ See Figure \ref{poloN} for a proper $3$-edge-coloring of $N$.

$(ii)$ Let $c_N$ be a proper $3$-edge-coloring of $N$. Suppose by contradiction that $c_N(f_1)=c_N(f_2)$. By applying the Parity Lemma, it results that $\{c_N(f_3), c_N(f_4), c_N(f_5)\} = \{a,b,c\}.$ By joining $f_3, f_4,$ and $f_5$ in a new vertex and merging $f_1$ and $f_2$, the $3$-edge-coloring of $N$ could be extended to a proper $3$-edge-coloring of the Petersen graph, a contradiction.
\end{proof}

We now define the semi-graph $Z$. As shown in Figure~\ref{fig:poloZ_colorato}, $Z$ is constructed by merging the semi-edges $e_3$ and $e_4$ of $M$ with the edges $f_1$ and $f_2$ of $N$, respectively. Furthermore, we join the semi-edge $e_5$ of $M$ and the semi-edge $f_5$ of $N$ to a new vertex, from which the semi-edge $z_5$ of $Z$ originates. It is easy to see that $Z$ is still a cubic semi-graph. For the remaining semi-edges, we set $z_1:=e_1, z_2:=e_2, z_3:=f_3, z_4:=f_4$.

\begin{figure}[htbp]
    \centering
    \includegraphics[width=\textwidth]{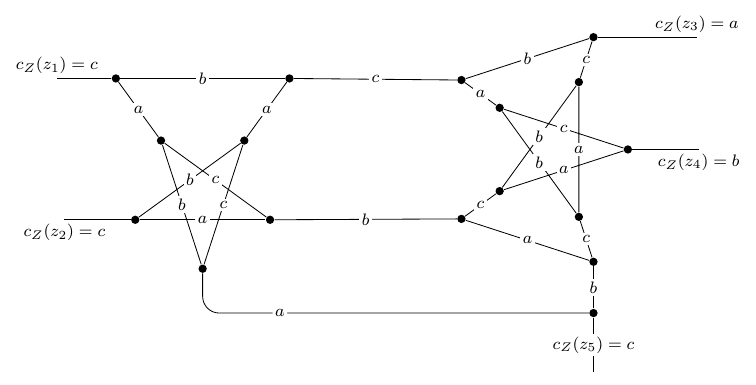}
    \caption{The proper $3$-edge-coloring $c_Z$ of the semi-graph $Z$.}
    \label{fig:poloZ_colorato}
\end{figure}

\begin{lemma} \label{lem:properties_Z}
The following properties hold for the semi-graph $Z$:
\begin{enumerate}
    \item[$(i)$] $r(Z) = 0$;
    \item[$(ii)$] in any proper $3$-edge-coloring $c'$ of $Z$, it results $c'(z_1)=c'(z_2)$ and $\{c'(z_3),c'(z_4), c'(z_5) \} = \{a,b,c\}$.
\end{enumerate}
\end{lemma}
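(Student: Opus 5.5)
For $(i)$ it is enough to exhibit the coloring $c_Z$ of Figure~\ref{fig:poloZ_colorato}. One can also obtain it directly. Take the coloring $c_M$ of $M$ and the coloring $c_N$ of $N$ from Lemmas~\ref{lem:properties_M} and~\ref{lem:properties_N}, and permute colors so that $c_M(e_3)=c_N(f_1)$ and $c_M(e_4)=c_N(f_2)$. This is possible because $c_M(e_3)\neq c_M(e_4)$ in the figure, and Lemma~\ref{lem:properties_N}$(ii)$ gives $c_N(f_1)\ne c_N(f_2)$. We must also have $c_M(e_5)\ne c_N(f_5)$; then $z_5$ receives the third color at the new vertex. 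Checking this compatibility is the only thing $(i)$ needs, and the figure settles it.

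For $(ii)$, let $c'$ be a proper $3$-edge-coloring of $Z$. Its restrictions to $M$ and $N$ are proper colorings of those semi-graphs, where the merged edges $e_3=f_1$ and $e_4=f_2$ keep their colors, and $e_5$, $f_5$ keep theirs. By Lemma~\ref{lem:properties_N}$(ii)$, $c'(f_1)\neq c'(f_2)$, so $c'(e_3)\ne c'(e_4)$. Lemma~\ref{lem:properties_M}$(ii)$ then forces $c'(e_1)=c'(e_2)$, which gives $c'(z_1)=c'(z_2)$.

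For the second claim, apply the Parity Lemma to $X=V(Z)$. The boundary $\partial_Z(V(Z))=\{z_1,\dots,z_5\}$ has odd size 5, so each color appears an odd number of times among the $z_i$. Since $z_1$ and $z_2$ share a color, say $x$, color $x$ appears on exactly one or exactly three of the $z_i$. The other two colors each appear an odd number of times, hence at least once each. Among $z_3,z_4,z_5$ we therefore need both non-$x$ colors, and $x$ can occur there at most once, since a total count of three for $x$ would leave no room for both other colors.

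So the situation is one of the following. Either $\{c'(z_3),c'(z_4),c'(z_5)\}=\{a,b,c\}$, with $x$ appearing three times in total; or $x$ is absent from $z_3,z_4,z_5$ and these three edges carry the other two colors, one of them twice. The second case gives each color an even count ($x$ twice, one color twice, one color once), which contradicts parity. Hence the first case holds. The argument uses only the Parity Lemma, so I do not expect any real obstacle; the only care needed is in correctly transferring colors across the merged edges.
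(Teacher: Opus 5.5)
Your proof is correct and follows the paper's argument: $(i)$ comes from the coloring of Figure~\ref{fig:poloZ_colorato}, and $(ii)$ combines Lemma~\ref{lem:properties_N}$(ii)$ with Lemma~\ref{lem:properties_M}$(ii)$ to get $c'(z_1)=c'(z_2)$, then applies the Parity Lemma to $X=V(Z)$. One wording slip: in the case you exclude, not every color has even count; the contradiction is simply that $x$ appears exactly twice, and more directly, $x$ appears at least twice while the other two colors need odd (hence positive) counts, so $x$ appears exactly three times.
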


\begin{proof}
$(i)$ See for example the proper $3$-edge-coloring $c_Z$ provided in Figure~\ref{fig:poloZ_colorato}.

$(ii)$ Since $c'(f_1)\neq c'(f_2)$ by Lemma \ref{lem:properties_N}, it follows that $c'(z_1)=c'(z_2)$ by Lemma \ref{lem:properties_M}. Then the Parity Lemma implies $\{c'(z_3), c'(z_4), c'(z_5) \} = \{a,b,c\}$.
\end{proof}

Note that Point $(ii)$ of the previous lemma was also proved in \cite[Section 5.8]{mrs2022}.


We now build the semi-graph $Y_1$. 


As illustrated in Figure~\ref{fig:Y1}, the semi-graph $Y_1$ is constructed using $Z$, $M$, and a copy of $Z$ denoted as $Z'$. For each semi-edge $z_i$ of $Z$, we denote the corresponding semi-edge of $Z'$ as $z_i'$. To obtain $Y_1$, we merge $z_3$ with $e_2$, $z_4$ with $e_1$, $e_4$ with $z_3'$, $e_3$ with $z_4'$, and $z_5$ with $z_5'$. We then relabel the five semi-edges of $Y_1$ as follows: we denote $z_1$ as $\alpha_1$, $z_2$ as $\beta_1$, $z_1'$ as $\gamma_1$, $z_2'$ as $\delta_1$, and $e_5$ as $\epsilon_1$. 
In Figure~\ref{fig:Y1}, we provide a representation of $Y_1$, where the component $Z'$ is drawn as a vertically reflected copy of $Z$.

\begin{figure}[htbp]
    \centering
    \includegraphics[width=\textwidth]{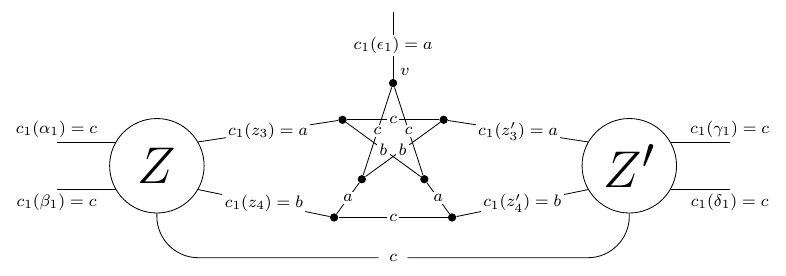} 
    \caption{A not proper $3$-edge-coloring $c_1$ of $Y_1$, with colors $a,b$ and $c$.}
    \label{fig:Y1}
\end{figure}

\begin{lemma} \label{lem:properties_Y1}
The following properties hold for the semi-graph $Y_1$:
\begin{enumerate}
    \item[$(i)$] $r(Y_1) = 1$;
    \item[$(ii)$] $r_f(Y_1) = 1$.
\end{enumerate}
\end{lemma}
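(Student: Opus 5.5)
We need to show two things: $Y_1$ admits no proper $3$-edge-colouring, and removing a single suitable edge gives a semi-graph that is $3$-edge-colourable and even admits a nowhere-zero $\mathbb{Z}_2\times\mathbb{Z}_2$-flow. Since $r(Y_1)\le r_f(Y_1)$ always holds, this yields both $(i)$ and $(ii)$. For a semi-graph I read "nowhere-zero flow" as the edge-colouring with values in $\{a,b,c\}$ satisfying Kirchhoff's law at every vertex, with semi-edges left free. Then a $\mathbb{Z}_2\times\mathbb{Z}_2$-NZF and a proper $3$-edge-colouring are the same object. The only difference between $r$ and $r_f$ is that removing an edge leaves degree-$2$ vertices, where the flow condition forces the two remaining edges to carry equal values.

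\emph{Lower bound.} Suppose $c$ is a proper $3$-edge-colouring of $Y_1$. Its restrictions to $Z$, to $Z'$ and to the central copy of $M$ are proper colourings of these semi-graphs.
\begin{itemize}
\item By Lemma~\ref{lem:properties_Z}$(ii)$ applied to $Z$, the colours $c(z_3),c(z_4),c(z_5)$ are pairwise distinct.
\item By the same lemma applied to $Z'$, the colours $c(z_3'),c(z_4'),c(z_5')$ are pairwise distinct.
\item Since $z_3$ is merged with $e_2$ and $z_4$ with $e_1$, we get $c(e_1)\ne c(e_2)$.
\item Since $e_4$ is merged with $z_3'$ and $e_3$ with $z_4'$, we get $c(e_3)\ne c(e_4)$.
\end{itemize}
This contradicts Lemma~\ref{lem:properties_M}$(ii)$ for the central $M$. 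Hence $r(Y_1)\ge 1$, and therefore $r_f(Y_1)\ge 1$.

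\emph{Upper bound.} I would exhibit the colouring $c_1$ of Figure~\ref{fig:Y1}, which is proper everywhere except at one edge. The natural candidate is the edge $z_5z_5'$. Removing it relaxes the constraints on $Z$ and $Z'$. Each of $Z-z_5$ and $Z'-z_5'$ can now be coloured with $z_3,z_4$ getting equal colours. I would obtain such a colouring by taking $c_N$ on $N$ with $f_5$ free, so that the new degree-$2$ vertex forces $c(e_5)=c(f_5)$ in $M$.

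Alternatively, one can simply remove an edge inside one copy of $Z$, chosen so that that copy's colouring satisfies $c(z_3)=c(z_4)$. The colouring is then completed on the central $M$ using the colouring $c_M$ with $c_M(e_1)=c_M(e_2)$ from Figure~\ref{poloM}, and on the other copy via $c_Z$. The compatibility of colours across the merged edges will be arranged by permuting colours in each block.

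For $r_f$, the deleted edge must be chosen so that at both of its endpoints the two remaining edges carry the same colour; equivalently, it is a colouring of $Y_1$ in which that edge is the unique defect with both of its ends "balanced". Choosing the deleted edge to be $z_5z_5'$ with $c(z_5)=c(z_5')$ achieves this. I would verify this directly from the explicit colouring $c_1$ in Figure~\ref{fig:Y1}, which presumably marks one improperly coloured edge whose removal leaves a valid flow.

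The main obstacle is this verification. I need to produce a colouring of $Y_1-e$ that satisfies the stronger equal-colour conditions at the two degree-$2$ vertices, rather than a mere proper colouring. The check is finite and explicit, but it needs the exact colourings of $M$, $N$ and $Z$ from the figures, adjusted by colour permutations and Kempe-chain swaps so that the merged edges agree.
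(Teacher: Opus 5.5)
Your lower bound is correct and is exactly the paper's argument. The upper bound has a genuine gap, because the edge you single out, $z_5z_5'$, works for neither parameter.

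\emph{Flow.} In $Y_1-z_5z_5'$ every vertex of the copies of $M$ and $N$ inside $Z$ still has degree $3$. So Lemmas~\ref{lem:properties_N}$(ii)$ and~\ref{lem:properties_M}$(ii)$ still force $\phi(z_1)=\phi(z_2)$. Remark~\ref{rmk:flow-edge-cut} on $V(Z)$ then gives $\phi(z_3)=\phi(z_4)$, as you noticed, and likewise $\phi(z_3')=\phi(z_4')$. But Remark~\ref{rmk:flow-edge-cut} applied to the central $M$ now gives $\phi(\epsilon_1)=\phi(e_1)+\phi(e_2)+\phi(e_3)+\phi(e_4)=0$, which is a second zero.

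\emph{Colouring.} You wrote that the degree-$2$ vertex forces $c(e_5)=c(f_5)$; that is the flow condition, not the colouring one. A proper colouring of $Y_1-z_5z_5'$ needs two \emph{different} colours there. It therefore extends to a proper colouring of $Z$ by giving $z_5$ the third colour. Lemma~\ref{lem:properties_Z}$(ii)$ then applies to both copies, and Lemma~\ref{lem:properties_M}$(ii)$ is violated exactly as in your lower bound.

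Your fallback (``some edge inside one copy of $Z$'') names no edge and checks nothing, so no upper bound is actually proved. The inequality $r\le r_f$ you use to pass from a flow to a colouring is also false in general: by Theorem~\ref{thm:res_flow_res_Hn}, $r(H_1)=2>1=r_f(H_1)$. A single zero on an ordinary edge leaves two degree-$2$ vertices whose remaining edges share a colour, and that does not yield $r\le 1$.

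The missing idea is to put the single defect on the semi-edge $\epsilon_1$, which touches only one vertex $v$. The paper takes the colouring $c_1$ of Figure~\ref{fig:Y1}. Its only conflict is at the end $v$ of $\epsilon_1$, where the two other edges share a colour. Changing $\epsilon_1$ to $0$ gives a flow with one zero. Deleting one of the two equally coloured edges at $v$, and recolouring $\epsilon_1$, gives a proper colouring. So both upper bounds come from one object.

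You can also obtain this colouring without the figure:
\begin{itemize}
\item By Lemma~\ref{lem:properties_Z}$(ii)$, both $\{c(z_3),c(z_4)\}$ and $\{c(z_3'),c(z_4')\}$ equal the complement of $c(z_5)$.
\item So on the central $M$ you need $c(e_1)+c(e_2)=c(e_3)+c(e_4)$, with $0$ on $\epsilon_1$.
\item That is precisely a nowhere-zero $\mathbb{Z}_2\times\mathbb{Z}_2$-flow on the Petersen graph minus the edge corresponding to $e_5$. It exists, because suppressing the two vertices of degree $2$ leaves a bridgeless cubic graph on $8$ vertices, which is $3$-edge-colourable.
\item Colour permutations of $c_Z$ on $Z$ and $Z'$ then match the merged edges.
\end{itemize}
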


\begin{proof}
$(i)$ By applying the proper $3$-edge-coloring $c_Z$ from Figure~\ref{fig:poloZ_colorato} to both $Z$ and $Z'$, and extending it to the remaining edges as shown in Figure~\ref{fig:Y1}, we obtain a not proper $3$-edge-coloring $c_1$ for $Y_1$. Under $c_1$, the only coloring conflict occurs at vertex $v$, proving that $r(Y_1) \leq 1$.

Suppose now, by contradiction, that there exists a proper $3$-edge-coloring $\phi$ of $Y_1$. According to Lemma~\ref{lem:properties_Z}$(ii)$, for the semi-graph $Z$, $\phi(z_3) \ne \phi(z_4)$, and for $Z'$, $\phi(z'_3) \ne \phi(z'_4)$. This violates Lemma~\ref{lem:properties_M}(ii), which states that at least one of these pairs must receive the same color in any proper $3$-edge-coloring. Consequently, $r(Y_1) = 1$. 

$(ii)$ Suppose that $r_f(Y_1) = 0$. This would imply that $Y_1$ admits a nowhere-zero $\mathbb{Z}_2 \times \mathbb{Z}_2$-flow and hence a proper $3$-edge-coloring, but this contradicts the first statement of this lemma.

Moreover, we can build a $\mathbb{Z}_2 \times \mathbb{Z}_2$-flow on $Y_1$ where the only edge receiving the zero value is the semi-edge $\epsilon_1$. This is done by taking the coloring $c_1$ from Figure~\ref{fig:Y1} and changing the value of $\epsilon_1$ from $a$ to $0$. Then $r_f(Y_1) = 1$.
\end{proof}

We can now construct the semi-graph $Y_i$ recursively for $i \ge 2$, starting from $Y_1$. We build $Y_i$ by taking $Y_{i-1}$, $Z$ and $Z'$, joined by adding 6 new vertices as shown in Figure~\ref{fig:Yi}. 

For $i \ge 2$, we relabel the five semi-edges of $Y_{i-1}$ as $\alpha_{i-1}, \beta_{i-1}, \gamma_{i-1}, \delta_{i-1}$, and $\epsilon_{i-1}$ in the way shown in Figure~\ref{fig:Yi}. In particular, to obtain $Y_i$, we join $z_3$ and $\gamma_{i-1}$ to a new vertex, $z_4$ and $\alpha_{i-1}$ to another new vertex, $z_3'$ and $\delta_{i-1}$ to a third new vertex, and $z_5$ and $z_5'$ to a fourth new vertex. Furthermore, we join $z_4'$ to a new vertex and $\beta_{i-1}$ to another new vertex. Finally, we restore the $3$-regularity as illustrated in Figure~\ref{fig:Yi}. 

\begin{lemma} \label{lem:properties_Yi}
For any positive integer $i$, the following properties hold for the semi-graph $Y_i$:
\begin{enumerate}
    \item[$(i)$] $r(Y_i) = 1$;
    \item[$(ii)$] $r_f(Y_i) = i$.
\end{enumerate}
\end{lemma}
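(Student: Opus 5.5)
I will argue by induction on $i$, with Lemma~\ref{lem:properties_Y1} as the base case $i=1$. The inductive step splits into four parts: the upper and lower bounds for $r(Y_i)$ and for $r_f(Y_i)$.

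\textbf{The resistance.} For $r(Y_i)\le 1$, I start from a $3$-edge-coloring of $Y_{i-1}$ with a single conflict, as in the coloring $c_1$ of Figure~\ref{fig:Y1}. I use $c_Z$ on both $Z$ and $Z'$. Since $Y_{i-1}$ is not properly colorable, I may need to permute colors inside $Z$ and $Z'$. I then check that the six new vertices can be colored without creating a further conflict. This should work because the constraints imposed by $Z$ are rigid (by Lemma~\ref{lem:properties_Z}$(ii)$, $z_1,z_2$ get equal colors and $z_3,z_4,z_5$ get distinct colors), while the semi-edges $\alpha_{i-1},\dots,\epsilon_{i-1}$ can be prescribed rather freely.

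For $r(Y_i)\ge 1$, suppose $\phi$ is a proper $3$-edge-coloring. By Lemma~\ref{lem:properties_Z}$(ii)$ applied to $Z$ and $Z'$, the colors on $z_3,z_4$ are distinct, and so are those on $z_3',z_4'$. I would propagate these through the new vertices to the semi-edges $\alpha_{i-1},\gamma_{i-1},\delta_{i-1},\dots$. The aim is to reach a contradiction with a structural property of $Y_{i-1}$, proved as an extra part of the inductive hypothesis in the same way Lemma~\ref{lem:properties_M}$(ii)$ was used for $Y_1$. An easier alternative is to observe that $Y_{i-1}$ itself is contained in $Y_i$ with all its vertices of full degree, so any proper coloring of $Y_i$ restricts to one of $Y_{i-1}$. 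This contradicts $r(Y_{i-1})=1$ and avoids any extra hypothesis.

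\textbf{Upper bound on the flow resistance.} For $r_f(Y_i)\le i$, take a $\mathbb{Z}_2\times\mathbb{Z}_2$-flow on $Y_{i-1}$ with $i-1$ zeros, which exists by induction. Extend it by nowhere-zero flows on $Z$ and $Z'$ (colorings) and assign values at the six new vertices. Allow exactly one new zero, for instance on $\epsilon_i$ or on a new edge, and check flow conservation.

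\textbf{Lower bound on the flow resistance (main obstacle).} Suppose a flow $\phi$ on $Y_i$ has at most $i-1$ zeros. The key idea is that $Z$ and $Z'$ are rigid for flows:
\begin{itemize}
\item if $\phi$ is nowhere-zero on $Z$ including its semi-edges, then $\phi$ gives a proper coloring of $Z$, so Lemma~\ref{lem:properties_Z}$(ii)$ applies;
\item otherwise, some zero lies in or on the boundary of $Z$.
\end{itemize}
I would do a case analysis. If no zero lies in $Z\cup Z'\cup\{\text{new edges}\}$, then the boundary values of $Y_{i-1}$ are forced by Lemma~\ref{lem:properties_Z} and Remark~\ref{rmk:flow-edge-cut}. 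I would then need a strengthened inductive statement: any flow on $Y_{i-1}$ whose boundary values satisfy the forced pattern (e.g.\ $\phi(\alpha_{i-1})\ne\phi(\gamma_{i-1})$ or similar) has at least $i$ zeros. Hence $Y_{i-1}$ contributes $i$ zeros, a contradiction. If instead some zero lies outside $Y_{i-1}$, then $Y_{i-1}$ must carry at most $i-2$ zeros, contradicting $r_f(Y_{i-1})=i-1$. The subtle point is zeros on the connecting edges, which are the boundary semi-edges of $Y_{i-1}$: these must be counted once, yet they relax the constraints on both sides. Formulating the right strengthened hypothesis about the boundary values of low-zero flows on $Y_{i-1}$ is where I expect the real work to lie. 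I would first try to prove it by reading off from Figure~\ref{fig:Yi} which boundary patterns are compatible with a nowhere-zero extension.
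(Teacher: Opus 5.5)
Your argument for $r(Y_i)\ge 1$, restricting a proper coloring of $Y_i$ to $Y_{i-1}$, is fine. The lower bound $r_f(Y_i)\ge i$, however, is not proved. You reduce it to a ``strengthened inductive statement'' about boundary values of low-zero flows on $Y_{i-1}$, which you never formulate, and in fact no such statement is needed.

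The missing idea is to apply Remark~\ref{rmk:flow-edge-cut} to the boundary of $Y_i$, not of $Y_{i-1}$:
\begin{enumerate}
\item[(1)] Charge any zero on a connecting edge $\alpha_{i-1},\dots,\epsilon_{i-1}$ to $Y_{i-1}$. These are its semi-edges, so the restriction of $\phi$ to $Y_{i-1}$ is a flow there and, by induction, has at least $i-1$ zeros.
\item[(2)] Suppose no edge outside $Y_{i-1}$ is zero. The semi-edges of $Z$ and $Z'$ are either joined to new vertices or are the semi-edges $\alpha_i,\beta_i,\gamma_i,\delta_i$ of $Y_i$. So $\phi$ is nowhere-zero on $Z$ and $Z'$, hence a proper coloring there, and Lemma~\ref{lem:properties_Z}$(ii)$ gives $\phi(\alpha_i)=\phi(\beta_i)$ and $\phi(\gamma_i)=\phi(\delta_i)$.
\item[(3)] Remark~\ref{rmk:flow-edge-cut} with $X=V(Y_i)$ then forces $\phi(\epsilon_i)=0$, although $\epsilon_i$ lies outside $Y_{i-1}$.
\end{enumerate}
So your case ``no zero in $Z\cup Z'\cup\{\text{new edges}\}$'' is simply impossible, whatever happens inside $Y_{i-1}$. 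Every flow has at least one zero outside $Y_{i-1}$ and at least $i-1$ on the $Y_{i-1}$ side, giving $r_f(Y_i)\ge i$. The difficulty you anticipated with zeros on connecting edges disappears once they are counted on the $Y_{i-1}$ side, since the contradiction never uses their values.

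Both upper bounds are also only sketched. Your justification, that $\alpha_{i-1},\dots,\epsilon_{i-1}$ ``can be prescribed rather freely'', is unfounded: induction only gives you the particular coloring or flow you built on $Y_{i-1}$, so its boundary values must be part of the inductive statement. The paper maintains a coloring $c_i$ whose only conflict is at $v$, with $c_i(\alpha_i)=c_i(\beta_i)=c_i(\gamma_i)=c_i(\delta_i)=c$ and $c_i(\epsilon_i)=a$. It uses $c_Z$ on $Z$, and on $Z'$ a copy of $c_Z$ modified by a $(b,c)$-Kempe switch along the chain joining $z_4'$ and $z_5'$, so that the six new vertices can be colored consistently. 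The flow with exactly $i$ zeros is likewise built inductively, with zeros exactly on $\epsilon_1,\dots,\epsilon_i$. Without such an explicit boundary invariant, the extension step through the new vertices is not guaranteed.
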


\begin{proof}
$(i)$ Since $Y_1 \subset Y_i$ for every $i \ge 1$, we have $r(Y_i) \ge r(Y_1) = 1$ by Lemma \ref{lem:properties_Y1}. 
We now prove by induction that $Y_i$ has a $3$-edge-coloring $c_i$ such that the only conflict is on the vertex $v$ in $Y_1$ and that $c_i(\alpha_i) = c_i(\beta_i) = c_i(\gamma_i) = c_i(\delta_i) = c $ and $c_i(\epsilon_i) = a$. 
For $i=1$, we already showed that $c_1$ is a $3$-edge-coloring satisfying these requests.
For $i>1$, we define a $3$-edge-coloring $c_i$ on $Y_i$ having a single conflict at the same vertex $v$ in $Y_1$ as follows. First fix on $Y_{i-1}$ the $3$-edge-coloring $c_{i-1}$ (which leaves only one conflict in $v$ in $Y_1 \subset Y_{i-1}$). Then fix on $Z$ the proper $3$-edge-coloring $c_Z$ shown in Figure~\ref{fig:poloZ_colorato}. Moreover, let $c_{Z'}$ be the $3$-edge-coloring on $Z'$ defined as follows: consider the $3$-edge-coloring $c_Z$ on $Z'$ and the Kempe chain with colors $b$ and $c$ connecting $z_4$ with $z_5$; then $c_{Z'}$ is obtained by switching colors $b$ and $c$ along this Kempe chain. Finally, extend these colorings to the full semi-graph $Y_i$ as shown in Figure \ref{fig:Yi}. Thus, $r(Y_i) = 1$.

$(ii)$ We proceed again by induction to prove that $r_f(Y_i) = i$. By Lemma~\ref{lem:properties_Y1}, $r_f(Y_{1}) =1$. Let $i>1$ and suppose that $r_f(Y_{i-1}) = i-1$. Since $Y_{i-1} \subset Y_i$, it follows that $r_f(Y_i) \ge r_f(Y_{i-1}) = i-1$. 
Suppose by contradiction that $r_f(Y_i) = i-1$. Consider a $\mathbb{Z}_2 \times \mathbb{Z}_2$-flow $\phi$ on $Y_i$ with the minimum number of zeros. The $i-1$ edges with zero-value must all belong to $Y_{i-1}$; otherwise, the restriction of $\phi$ to $Y_{i-1}$ would yield a flow with fewer than $i-1$ zeros, contradicting the inductive hypothesis $r_f(Y_i) = i-1$. 
Consequently, all edges in $Y_i \setminus Y_{i-1}$ must have non-zero values under $\phi$. In this case, $\phi$ behaves as a proper $3$-edge-coloring on the edges outside $Y_{i-1}$. By Lemma~\ref{lem:properties_Z}, it results $\phi(\alpha_i) = \phi(\beta_i)$ and $\phi(\gamma_i) = \phi(\delta_i)$.
By Remark \ref{rmk:flow-edge-cut} and by the color impositions on the semi-graphs $Z$ and $Z'$, we have: $\phi(\alpha_i) + \phi(\beta_i) + \phi(\gamma_i) + \phi(\delta_i) + \phi(\epsilon_i) = \phi(\epsilon_i)= 0 $. This creates an $i$-th zero on $\epsilon_i$, and contradicts the assumption that only $i-1$ zeros were necessary. Thus, $r_f(Y_i) \ge i$. 
To prove that $r_f(Y_i) = i$, we observe that a $\mathbb{Z}_2 \times \mathbb{Z}_2$-flow with exactly $i$ zeros can be constructed as shown in Figure~\ref{fig:Yi_flow}, by giving the value $0$ to each edge $\epsilon_j$ for $j=1,\dots, i$.
\end{proof}

\begin{figure}[htbp]
    \centering
    \includegraphics[width=0.95\textwidth]{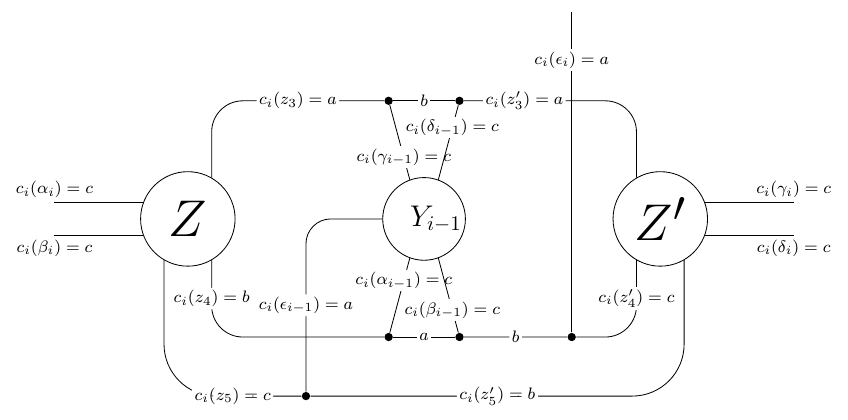}
    \caption{A (non-proper) $3$-edge-coloring $c_i$ of $Y_i$.}
    \label{fig:Yi}
\end{figure}

\begin{figure}[htbp]
    \centering
    \includegraphics[width=0.95\textwidth]{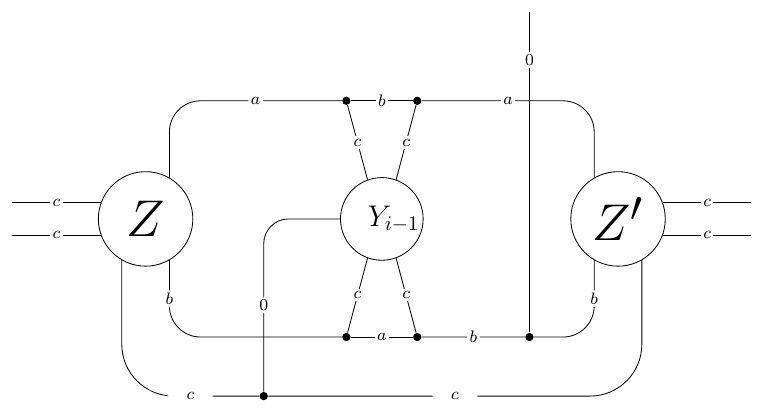}
    \caption{Inductive construction of a $\mathbb{Z}_2 \times \mathbb{Z}_2$-flow on $Y_i$ with exactly $i$ zero values.}
    \label{fig:Yi_flow}
\end{figure}

We can now construct the graph $H_n$ from the semi-graph $Y_n$ as follows: we merge the two horizontal semi-edges $\alpha_n$ and $\gamma_n$ to form a single edge $\pi$ and we connect the remaining three semi-edges $\beta_n$, $\delta_n$ and $\epsilon_n$ to a new vertex $x$, as shown Figure \ref{fig:Hn_color}.

We are now ready to prove the following.

\begin{theorem}\label{thm:res_flow_res_Hn}
For every integer $n \geq 1$, $H_n$ is a snark of order $40n +2$, with $r(H_n) = 2$ and $r_f(H_n) = n$.
\end{theorem}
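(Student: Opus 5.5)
The proof splits into four parts: the order count, the snark property, $r(H_n)=2$, and $r_f(H_n)=n$.

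\textbf{Order.} $M$ has $7$ vertices and $N$ has $9$, so $Z$ has $7+9+1=17$ vertices. Hence $Y_1$ has $2\cdot 17+7=41$ vertices. Each recursive step adds $2\cdot 17+6=40$ vertices, so $Y_n$ has $40n+1$ vertices. Adding $x$ gives $|V(H_n)|=40n+2$.

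\textbf{Snark property.} Since $Y_1\subset H_n$, Lemma~\ref{lem:properties_Y1}$(i)$ shows that $H_n$ is not $3$-edge-colorable. $2$-connectivity (in fact cyclic $5$-edge-connectivity, needed for Theorem~\ref{thm:main}) follows from the structure: every piece is a Petersen fragment, and they are glued along $5$-edge interfaces. I would only sketch connectivity here, or defer it to a separate lemma.

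\textbf{Resistance.} The lower bound $r(H_n)\ge 2$ holds because it is true for every non-$3$-edge-colorable cubic graph. For the upper bound, I take the coloring $c_n$ from the proof of Lemma~\ref{lem:properties_Yi}$(i)$. Its only conflict is at $v$, and $c_n(\alpha_n)=c_n(\beta_n)=c_n(\gamma_n)=c_n(\delta_n)=c$, $c_n(\epsilon_n)=a$. Merging $\alpha_n,\gamma_n$ into $\pi$ colored $c$ is consistent. At $x$ the incident colors are $c,c,a$, giving one more conflict. Deleting one edge at $v$ and one at $x$ (e.g.\ $\beta_n$) leaves a properly colored graph, so $r(H_n)\le 2$.

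\textbf{Flow resistance, upper bound.} I use the flow of Figure~\ref{fig:Yi_flow} on $Y_n$ with zeros exactly on $\epsilon_1,\dots,\epsilon_n$. By Lemma~\ref{lem:properties_Z}, the boundary values satisfy $\phi(\alpha_n)=\phi(\beta_n)$ and $\phi(\gamma_n)=\phi(\delta_n)$. I need the figure's flow to have all four equal, say to $c$. Then $\pi$ gets value $c$ and the flow is conserved at the merged edge. At $x$ the values are $c,c,0$, which sum to $0$, so the flow extends with $\epsilon_n$ as the only zero there. Thus $r_f(H_n)\le n$.

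\textbf{Flow resistance, lower bound.} Suppose a $\mathbb{Z}_2\times\mathbb{Z}_2$-flow $\phi$ on $H_n$ has fewer than $n$ zeros. Its restriction to $Y_n$ (the semi-edges being the cut edges) is a flow on $Y_n$ with at most as many zeros. This contradicts $r_f(Y_n)=n$ from Lemma~\ref{lem:properties_Yi}$(ii)$. For this I must check that restricting to a contained semi-graph gives a flow counting zeros on semi-edges. This is exactly how the paper uses $r_f$ of semi-graphs, since the cut sum of Remark~\ref{rmk:flow-edge-cut} is unrestricted.

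\textbf{Main obstacle.} The delicate point is the upper bound: verifying that the explicit flow on $Y_n$ has $\phi(\alpha_n)=\phi(\gamma_n)$ so that the merge into $\pi$ is conserved. If the figure's flow instead had $\phi(\alpha_n)\ne\phi(\gamma_n)$, I would fix it by a Kempe-chain switch inside the last copy of $Z'$. This is the same trick as in the proof of Lemma~\ref{lem:properties_Yi}$(i)$, and it swaps two colors on $\gamma_n,\delta_n$ simultaneously while preserving $\phi(\gamma_n)=\phi(\delta_n)$.
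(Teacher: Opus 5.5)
Your outline follows the paper's proof step by step: the vertex count, non-colorability from the contained $Y_1$ (the paper uses $Y_n$), the coloring $c_n$ extended to $H_n$ with conflicts only at $v$ and $x$, and the flow of Figure~\ref{fig:Yi_flow} for $r_f(H_n)\le n$. Your worry about needing $\phi(\alpha_n)=\phi(\gamma_n)$ is legitimate, since the paper just reads this off the figure. Your Kempe-chain fallback works, but you should state why. If $\phi(\alpha_n)=p\neq q=\phi(\gamma_n)$, take the $(p,q)$-Kempe chain of the last $Z'$ starting at $z_1'$. It cannot end at $z_3'$, $z_4'$ or $z_5'$, because switching it would give a proper coloring of $Z'$ with $z_1'\neq z_2'$, contrary to Lemma~\ref{lem:properties_Z}$(ii)$. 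So it ends at $z_2'$, and switching it changes only $\gamma_n,\delta_n$ among the boundary edges of $Z'$.

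The step that fails as written is the lower bound. $Y_n$ is not the piece of $H_n$ cut off by $\partial_{H_n}(V(Y_n))$. That cut consists only of the three edges at $x$, while the two semi-edges $\alpha_n,\gamma_n$ of $Y_n$ both come from the single edge $\pi$. So if a flow $\phi$ on $H_n$ with $k$ zeros has $\phi(\pi)=0$, its restriction to $Y_n$ has $k+1$ zeros, not at most $k$. Then $r_f(Y_n)=n$ only yields $k\ge n-1$. The paper's one-line appeal to $Y_n\subseteq H_n$ has the same blind spot.

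The repair is short. For $n=1$, $r_f(H_1)\ge 1$ is just non-colorability. For $n\ge 2$ and $\phi(\pi)=0$, the zero on $\pi$ lies outside $Y_{n-1}$. The five semi-edges of $Y_{n-1}$ are each joined to a new vertex, never to one another, so they correspond to five distinct edges of $H_n$. Hence the restriction of $\phi$ to $Y_{n-1}$ has at most $k-1$ zeros, and Lemma~\ref{lem:properties_Yi}$(ii)$ with $i=n-1$ gives $k-1\ge n-1$, i.e.\ $k\ge n$. When $\phi(\pi)\neq 0$, your restriction argument is fine as stated.
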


\begin{proof}
By construction, $H_n$ is a cubic graph on $40n+2$ vertices.

Since $H_n$ contains $Y_n$ as a subgraph, it is not $3$-edge-colorable and so $r(H_n) \geq 2$. By extending the $3$-edge-coloring $c_n$ of $Y_n$ to the new edges as depicted in Figure~\ref{fig:Hn_color}, we find exactly two conflicts at vertices $v$ and $x$. Thus, $r(H_n) = 2$.

Regarding the flow resistance, since $Y_n \subseteq H_n$, it follows that $r_f(H_n) \geq r_f(Y_n) = n$. By using the $\mathbb{Z}_2 \times \mathbb{Z}_2$-flow on $Y_n$ shown in Figure~\ref{fig:Yi_flow}, we can define a $\mathbb{Z}_2 \times \mathbb{Z}_2$-flow on $H_n$ having only $n$ zero values. Thus, $r_f(H_n) = n$.
\end{proof}   

\begin{figure}[htbp]
    \centering
    \includegraphics[width=\textwidth]{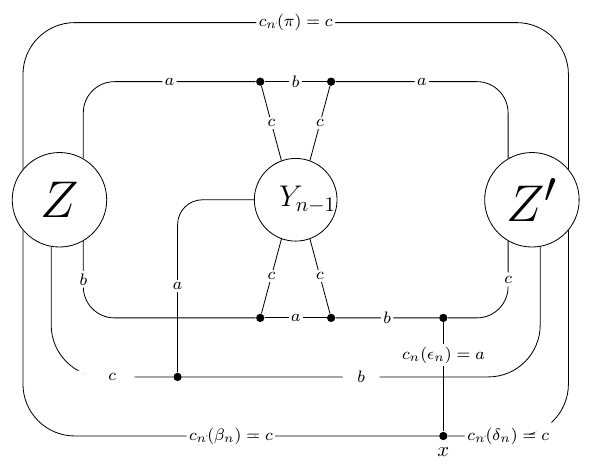}
    \caption{The $3$-edge-coloring $c_n$ of $H_n$.}
    \label{fig:Hn_color}
\end{figure}

\section{Cyclic connectivity of $H_n$}

In this section we prove that $H_n$ is cyclically $5$-edge-connected for every $n\ge1$.

\begin{definition}
Let $G_1, G_2$ be cubic graphs. For each $i \in \{1,2\}$, let $v_i \in V(G_i)$ and $e_i \in E(G_i)$ be such that $e_i$ is not incident to $v_i$. Moreover, let $G_i^* = G_i - v_i - e_i$. We define $G_1 \ast G_2 (v_1, e_1, v_2, e_2)$ as the set of cubic graphs obtained adding five edges connecting $G_1^*$ with $G_2^*$.
\end{definition}

If the vertices and edges $v_1, e_1, v_2, e_2$ in previous definition are clear from the context, we will denote the set $G_1 \ast G_2 (v_1, e_1, v_2, e_2)$ simply by $G_1 \ast G_2$.

Let $G \in G_1 \ast G_2 (v_1, e_1, v_2, e_2)$. We denote by $E_{conn}$ the set $\partial_G(G_1^*) = \partial_G(G_2^*)$. Edges in $E_{conn}$ are called \emph{connecting edges}. 

\begin{lemma}\label{lem:tree}
Let $T$ be a tree with only vertices of degree one and three. Let $\ell$ be the number of leaves of $T$.
Then $\ell=|V(T)|/2 + 1$.
\end{lemma}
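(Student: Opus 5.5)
We prove this by double counting, using the handshake identity together with the fact that a tree has one edge fewer than it has vertices. Write $n_1=\ell$ for the number of leaves and $n_3$ for the number of vertices of degree three, so that $|V(T)|=n_1+n_3$. Since every vertex of $T$ has degree one or three, the sum of the degrees is $n_1+3n_3$. This sum equals $2|E(T)|$, and since $T$ is a tree, $|E(T)|=|V(T)|-1=n_1+n_3-1$.

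Equating the two expressions gives
\[
n_1+3n_3 = 2(n_1+n_3-1),
\]
which simplifies to $n_1 = n_3+2$. Substituting this into $|V(T)|=n_1+n_3$ yields $|V(T)| = 2n_3+2$, so $n_3=|V(T)|/2-1$. Therefore
\[
\ell = n_1 = n_3+2 = |V(T)|/2+1,
\]
as claimed.

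We should also check the degenerate cases. If $T$ is a single vertex, its degree is $0$, which is excluded by the hypothesis. If $T$ is a single edge, then $|V(T)|=2$ and $\ell=2=2/2+1$, so the formula holds; this case is covered by the same computation with $n_3=0$. No step here presents a real obstacle. The only points to keep in mind are that trees in this paper are ordinary graphs, with no semi-edges, so that degrees are counted in the usual way, and that $|V(T)|$ comes out even automatically from the computation above.
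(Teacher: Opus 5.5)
Your proposal is correct and follows essentially the same approach as the paper: the handshake identity $\sum_v \deg_T(v)=2|E(T)|$, combined with $|E(T)|=|V(T)|-1$, is solved for $\ell$. The only difference is that you introduce the auxiliary count $n_3$ and remark on degenerate cases, where the paper directly writes $2(|V(T)|-1)=\ell+3(|V(T)|-\ell)$.
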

\begin{proof}
The considered tree $T$ has $\ell$ vertices of degree one and $|V(T)|-\ell$ vertices of degree three. By applying the Handshaking Lemma, we have $2(|V(T)|-1) = 2|E(T)| = \sum_{v \in V(T)} \deg_T(v) = \ell + 3(|V(T)|-\ell)$. Solving for $\ell$ yields the statement.
\end{proof}

Using the previous lemma, one can prove the following fact. Recall that an edge-cut $\partial_G(X)$ of a graph $G$ is called \emph{trivial} if at least one of $X$ and $V(G)\setminus X$ contains only one vertex. It is called \emph{non-trivial} otherwise.

\begin{remark}\label{rmk:cubic_cuts}
In any connected cubic graph, every $1$-edge-cut, $2$-edge-cut, and non-trivial $3$-edge-cut is a cyclic cut.
\end{remark}

In the following lemma, we denote by $G\cap X$ the subgraph of $G$ induced by $V(G)\cap X$, for a given vertex set $X$.

\begin{lemma}\label{lem:main}
Let $G_1$ and $G_2$ be two cyclically $5$-edge-connected cubic graphs. Let $v_1 \in V(G_1), e_1 \in E(G_1)$ and $v_2 \in V(G_2), e_2 \in E(G_2)$. If $d_{G_1}(v_1, e_1) \ge 3$ and $d_{G_2}(v_2, e_2) \ge 2$, then every graph in the set $G_1 \ast G_2 (v_1, e_1, v_2, e_2)$ is cyclically $5$-edge-connected.
\end{lemma}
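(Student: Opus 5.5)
Suppose, for a contradiction, that $G\in G_1\ast G_2$ has a cyclic edge-cut $S=\partial_G(X)$ with $|S|\le 4$. Choose $S$ of minimum size, so that both $G[X]$ and $G[\overline X]$ are connected. The plan is to translate $S$ into a small cyclic cut of $G_1$ or of $G_2$, which contradicts their cyclic $5$-edge-connectivity.

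Write $X_i=X\cap V(G_i^*)$ and $\overline X_i=\overline X\cap V(G_i^*)$. Then $S$ splits into $S_1\subseteq E(G_1^*)$, $S_2\subseteq E(G_2^*)$ and $S_c=S\cap E_{conn}$. Note that $G_i^*$ is obtained from $G_i$ by deleting the vertex $v_i$ and the edge $e_i$. Reinserting $v_i$ and $e_i$ gives a cut of $G_i$:
\[
\partial_{G_i}(X_i)\ \text{or}\ \partial_{G_i}(X_i\cup\{v_i\}),
\]
and its size is at most $|S_i|$, plus the number of connecting edges at $G_i^*$ crossing between $X_i$ and $\overline X_i$ (these route through $v_i$ or $e_i$), plus the contribution of $e_i$.

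The key counting step is this. The five connecting edges attach to $G_i^*$ at the three neighbours of $v_i$ and the two ends of $e_i$. If $S_c$ is empty, then one side, say $\overline X$, is contained in a single $G_i^*$. The set $\overline X$ is then a vertex set of $G_i$ avoiding $v_i$, and $\partial_{G_i}(\overline X)\subseteq S\cup\{e_i\}$ up to replacing connecting edges by edges to $v_i$. This gives a cut of size at most $|S|+1\le 5$ in $G_i$. We must then show that it has size at most $4$, or that it is cyclic of size $5$ but forces a contradiction. Here the distance conditions enter: since $d(v_i,e_i)\ge 2$, the ends of $e_i$ and the neighbours of $v_i$ are distinct and nonadjacent. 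So a set $\overline X$ containing attachment points of both kinds is controlled by a parity/counting argument; in $G_1$ the stronger bound $d\ge3$ handles the case where $\overline X$ contains an end of $e_1$ together with a neighbour of a neighbour of $v_1$.

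In general, when $S_c\ne\emptyset$, let $k=|S_c|\ge1$. Then $|S_1|+|S_2|\le 4-k$, and I will argue that one of the two sides is small. Take the $i$ for which $\overline X\cap V(G_i^*)$, or $X\cap V(G_i^*)$, contains few attachment vertices, and put $v_i$ (and the ends of $e_i$) on the side containing the majority of the five attachments. The resulting cut in $G_i$ then has size at most $|S_i|+\min(a,5-a)$, where $a$ is the number of attachments on one side. A short case analysis on $(a_1,a_2)$, with $a_1+a_2$ determined via $S_c$, gives a cut of size at most $4$ in some $G_i$. This cut is either cyclic, which is a contradiction, or trivial or acyclic. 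In the latter case the corresponding side of $G$ is a forest, and Lemma~\ref{lem:tree} together with Remark~\ref{rmk:cubic_cuts} shows that an acyclic side with at most $4$ boundary edges has at most $2$ vertices. Such a side cannot have been cyclic in $G$ unless it contained vertices from both $G_i^*$. The distance hypotheses then rule this out: a short cycle through connecting edges would need two attachment vertices at distance at most $1$ or $2$ in $G_i^*$, contradicting $d_{G_1}(v_1,e_1)\ge3$ and $d_{G_2}(v_2,e_2)\ge2$.

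The main obstacle is the bookkeeping in the mixed case $S_c\ne\emptyset$, specifically ensuring that the induced cut in $G_i$ is genuinely cyclic rather than trivial. I would handle this by first proving a sublemma: in $G_i^*$, every set $W$ with $|\partial_{G_i^*}(W)|$, counting connecting attachments, at most $4$ and with $G[W]$ containing a cycle must contain at least three attachment vertices. This follows from cyclic $5$-connectivity of $G_i$ plus the distance conditions. With this sublemma, both sides of $S$ would need at least three attachments each in some $G_i^*$, which totals more than the five available, giving the contradiction.
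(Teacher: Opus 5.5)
There is a genuine gap: the configuration that carries the real difficulty is never treated. Your assertion that $S_c=\emptyset$ forces one side of $S$ into a single $G_i^*$ is false. With $S_c=\emptyset$, some connecting edges can join $X_1$ to $X_2$ while others join $\overline X_1$ to $\overline X_2$, so both $X$ and $\overline X$ meet both $G_1^*$ and $G_2^*$. This is not a corner case; it is the only one that survives. The paper first shows $X\not\subseteq V(G_i^*)$. This is essentially your sublemma, which as stated holds vacuously because no such $W$ exists. It then shows that each $G_i^*$ contains at least two edges of $S$: a single such edge would give a nontrivial cut of size at most $3$ in $G_i$, or a trivial one forcing $d_{G_i}(v_i,e_i)\le 1$. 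With $|S|\le 4$ this forces $|S|=4$, $S_c=\emptyset$, $|S_1|=|S_2|=2$, and connecting edges on both sides of the cut. Your case split never examines this configuration.

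Your mixed-case argument would not cover it either. If the induced cut in $G_i$ has an acyclic side, you only learn that $X_i$ or $\overline X_i$ (possibly with $v_i$) is a forest with at most two vertices. The cycles of $G[X]$ may lie in $X_{3-i}$ or run through connecting edges and need not be short, so ``the corresponding side of $G$ is a forest'' does not follow. For the same reason the sublemma, which needs $G[W]$ to contain a cycle with $W\subseteq V(G_i^*)$, says nothing when $X_i$ is acyclic. Your closing count (three attachments per side exceeding five) also requires both sides to be counted in the same $G_i^*$, which you never arrange. Moreover, $d(v_i,e_i)\ge 2$ only makes the ends of $e_i$ distinct from the neighbours of $v_i$, not nonadjacent to them. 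So in $G_2$ a two-vertex piece $\{x,y\}$, with $x$ a neighbour of $v_2$ adjacent to an end $y$ of $e_2$, is allowed. Choosing $i$ by ``few attachments'' may land you in $G_2$, where the distance hypothesis excludes nothing.

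The missing idea is to analyse the balanced configuration inside $G_1$, where $d_{G_1}(v_1,e_1)\ge 3$ is available. Let $m$ be the number of connecting edges at $X_1$. If $X_1$ and $\overline X_1$ both contain cycles, you get a cyclic cut of size at most $4$ in $G_1$. If, say, $\overline X_1$ is a tree, Lemma~\ref{lem:tree} gives $|\overline X_1|=5-m$. Then $m\in\{1,2\}$ yields a cyclic $3$- or $4$-cut in $G_1$, $m=3$ yields a triangle or $d_{G_1}(v_1,e_1)\le 2$, and $m=4$ yields a cyclic $3$-cut in $G_2$. If both are trees, $|V(G_1)|=6$, so $G_1$ is the prism or $K_{3,3}$, neither cyclically $5$-edge-connected.
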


\begin{proof}
Let $G \in G_1 \ast G_2 (v_1, e_1, v_2, e_2)$. First of all, note that $G_i^*$ is a connected graph for $i\in\{1,2\}$.

Suppose, by contradiction, that there exists a cyclic $k$-edge-cut $\partial_G(X)$, for a suitable $X\subseteq V(G)$, with $k \in \{1,2,3,4\}$. Among all the possible cyclic $k$-edge-cuts, we choose one with $X$ of minimum cardinality.

\begin{claim}\label{claim1}
    For every $i\in \{1,2\}, X \not\subseteq V(G_i^*). $
\end{claim}

\begin{proof}
Suppose that $X \subseteq V(G_i^*)$. Since $\partial_G(X)$ is a cyclic $k$-edge-cut, $X$ must contain at least one cycle. 
From the fact that $k<5$ we deduce $X\subsetneq V(G_i^*)$. Moreover, since $G_i^*$ is connected, there is at least one edge with one endpoint in $G_i^*\cap X$ and the other in $G_i^* - X$.

Therefore, if $k\le3$, then $\partial_{G_i}(X)$ is an $h$-edge-cut in $G_i$, with $h=k$ if $k\le 2$, and $h\in \{k,k-2\}$ if $k=3$.
Note that $|V(G_i)\setminus X|\ge2$ and, since $X$ contains a cycle, we also have that $|X|\ge2$. Then, $\partial_{G_i}(X)$ is a non-trivial $h$-edge-cut in $G_i$ and so, by Remark \ref{rmk:cubic_cuts}, it is cyclic, a contradiction.

Set $k=4$. Note that $\partial_{G_i}(X)$ is an $h$-edge-cut, with $h\in\{2,4\}$. If $h=2$ the same argument as above applies and we get a contradiction. Therefore, let $h=4$. Since $G_i$ is cyclically $5$-edge-connected, $G_i-X$ cannot contain a cycle. Thus, by Lemma \ref{lem:tree}, $G_i-X$ consists of two vertices (see Figure \ref{fig:cases_claim1}). This means that $d_{G_i}(v_i,e_i)\le 1$, a contradiction. This proves the claim.
\end{proof}

\begin{figure}[htbp]
    \centering
    \includegraphics[scale=0.9]{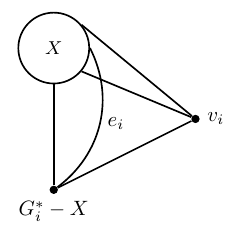}
    \caption{Structure of $G_i$ leading to a contradiction in Claim \ref{claim1}.}
    \label{fig:cases_claim1}
\end{figure}

In particular, $\partial_G(X)$ must contain at least one edge separating $G_1^* \cap X$ from $G_1^* - X$, and at least one edge separating $G_2^* \cap X$ from $G_2^* - X$.

\begin{claim}\label{claim2}
For $i \in \{1,2\}$, at least two edges of $\partial_G(X)$ must connect $G_i^* \cap X$ and $G_i^* - X$. 
\end{claim}

\begin{proof}
By contradiction, suppose there is only one edge $q \in \partial_G(X)$, which separates $G_i^* \cap X$ and $G_i^* - X$, for some $i\in \{1,2\}$. Let $m$ be the number of edges of $E_{conn}$ incident to $G_i^* \cap X$. See Figure \ref{fig:cases_claim2} for the cases $m\in \{3,4,5\}$.

    If $m\ge4$, then $G_i^*-X$ is adjacent to $c$ and to at most one more connecting edge. Therefore, $\partial_{G_i}(X\cup \{v_i\})$ is a non-trivial $h$-edge-cut in $G_i$, with $h\le 2$. Thus, by Remark \ref{rmk:cubic_cuts}, it is cyclic, a contradiction.
    
    If $m=3$, then $G_i^*-X$ is adjacent to $c$ and to two connecting edges. Then, $\partial_{G_i}(X\cup \{v_i\})$ is a non-trivial $h$-edge-cut in $G_i$, for $h\in \{1,3\}$. Since $G$ is a cyclically $5$-edge-connected graph, by Remark \ref{rmk:cubic_cuts} we deduce that $\partial_{G_i}(V(G_i)\setminus (X \cup \{v_i\}))$ is a trivial $3$-edge-cut. This means that $d_{G_i}(v_i,e_i)=1$, a contradiction.    
    
    If $m \in \{0,1,2\}$: this cases can be proved in the same way as previous points, arguing for $G_i^* \cap X$ instead of $G_i^* - X$.\qedhere
\end{proof}

\begin{figure}[htbp]
    \centering
    \begin{minipage}[b]{0.33\textwidth}
        \centering
        \includegraphics[width=\textwidth]{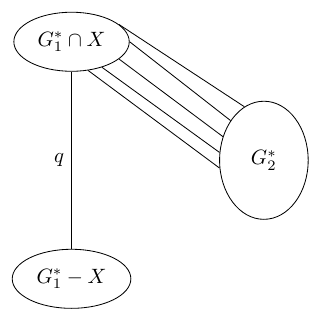} 
        \caption*{$m=5$}
    \end{minipage}\hfill
    \begin{minipage}[b]{0.33\textwidth}
        \centering
        \includegraphics[width=\textwidth]{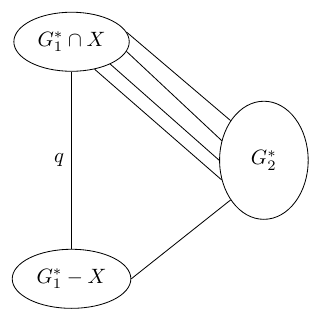} 
        \caption*{$m=4$}
    \end{minipage}\hfill
    \begin{minipage}[b]{0.33\textwidth}
        \centering
        \includegraphics[width=\textwidth]{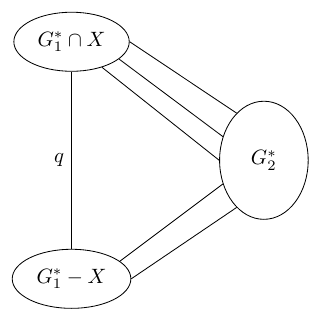} 
        \caption*{$m=3$}
    \end{minipage}
    \caption{Cases for $m \in \{3,4,5\}$ of Claim \ref{claim2}, for $i=1$.}
    \label{fig:cases_claim2}
\end{figure}

By Claims \ref{claim1} and \ref{claim2}, we deduce that $\partial_G(X)$ is a cyclic $4$-edge-cut.

Now, since $X$ was chosen to be of minimum cardinality, observe that the subgraph induced by $X$ must be connected. Hence, each connecting edge must connect either $G_1^*\cap X$ with $G_2^*\cap X$ or $G_1^*- X$ with $G_2^*- X$. Indeed, if there existed a connecting edge $f$ adjacent to $G_1^*\cap X$ and to $G_2^*- X$, then $f\in \partial_G(X)$ and hence $|\partial_G(X)|\ge5$, which contradicts the fact that $\partial_G(X)$ is a cyclic $4$-edge-cut.

Moreover, there exists at least one connecting edge between $G_1^*\cap X$ and $G_2^*\cap X$ and one connecting edge between $G_1^*- X$ and $G_2^*- X$, see Figure \ref{fig:quadranti}. If not, the restriction of $\partial_G(X)$ to $G_i$ would constitute a $2$-edge-cut in $G_i$, in contradiction with the connectivity assumption.

\begin{figure}[h]
    \centering
    \includegraphics[width=0.4\textwidth]{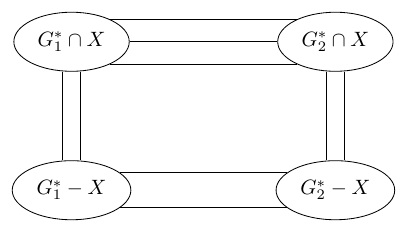} 
    \caption{A possible configuration of the graph $G$ with the considered set $X$.}
    \label{fig:quadranti}
\end{figure}

We are now ready to conclude the proof deriving a contradiction with the hypothesis on $G_1$.
Consider now $G_1^*$.
We have that $G_1^*\cap X$ is connected to $G_1^*- X$ by two edges of the $4$-edge-cut $C$, at least one connecting edge is incident to $G_1^*\cap X$ and at least one connecting edge to $G_1^*- X$.

If both $G_1^*\cap X$ and $G_1^*- X$ contain a cycle, by the same argument of Claim \ref{claim1}, it easy to see that $G_1$ would have a cyclic $k$-edge-cut, with $k<5$, a contradiction. Hence, at least one between $G_1^*\cap X$ and $G_1^*- X$ is a tree. 

Suppose now that $G_1^*\cap X$ contains a cycle and that $G_1^*- X$ is a tree.
Let $m$ be the number of connecting edges incident to $G_1^*\cap X$. If $m=1$ or $m=2$, we can see easily that $G_1$ would have a cyclic $3$ or $4$-cut, a contradiction. If $m=3$, $G_1$ has two possibilities: $(a)$ either $e_1$ connects two vertices of $G_1^*\cap X$ or $(b)$ $e_1$ connects a vertex of $G_1^*\cap X$ with a vertex of $G_1^*- X$, see Figure \ref{fig:G_1trees}.

In both cases, we arrive to a contradiction. In the case $(a)$, $G_1$ contains a triangle, and thus it is not cyclically $5$-edge-connected, a contradiction. In the case $(b)$, note that $d_{G_1}(v_1,e_1)\le2$ which also contradicts the hypothesis.

Hence $m=4$, which means that $V(G_1^*\setminus X)$ consists of only a vertex. Moreover $G_2^* \cap X$ is adjacent to four connecting edges, $G_2^* - X$ is adjacent to one connecting edge and, since $\partial_G(X)$ is a cyclic cut, $G_2^* - X$ contains a cycle. Then $G_2$ would have cyclic a $3$-cut, a contradiction.

\begin{figure}[htbp]
    \centering
    \begin{minipage}[t]{0.4\textwidth}
        \centering
        \includegraphics[width=\textwidth]{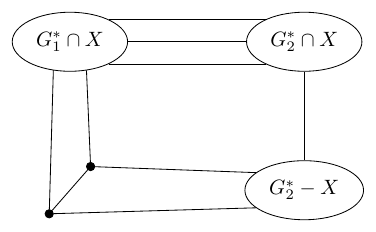} 
        \caption*{The graph $G$.}
    \end{minipage}\hfill
    \begin{minipage}[t]{0.3\textwidth}
        \centering
        \includegraphics[width=\textwidth]{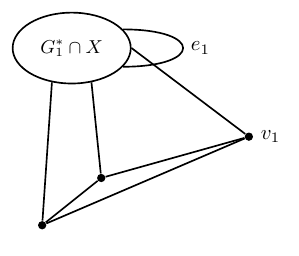} 
        \caption*{$G_1$ in case $(a)$.}
    \end{minipage}\hfill
    \begin{minipage}[t]{0.3\textwidth}
        \centering
        \includegraphics[width=\textwidth]{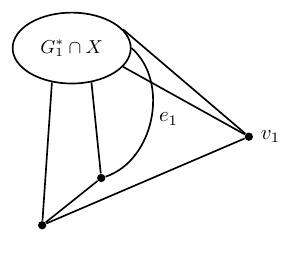} 
        \caption*{$G_1$ in case $(b)$.}
    \end{minipage}
    \caption{The case $m=3$, when $G_1^*\cap X$ contains a cycle and $G_1^*- X$ is a tree.}
    \label{fig:G_1trees}
\end{figure}

If $G_1^*\cap X$ is a tree and $G_1^*- X$ contains a cycle, then we reach the same conclusions. Hence both $G_1^*\cap X$ and $G_1^*- X$ are trees. By Lemma \ref{lem:tree}, this implies that $|V(G_1^*)|=5$ and $|V(G_1)|=6$. Then $G_1$ must be either the triangular prism or $K_{3,3}$, neither of which is cyclically $5$-edge-connected, in contradiction with our hypothesis. This completes the proof.
\end{proof}

Thanks to the previous lemma, we are now able to show the cyclic $5$-edge-connectivity of $H_{n+1}$ by induction on $n$, using the connectivity of $H_n$ and the operation defined above with the graph $J$ (see Figure \ref{fig:graph_J}). 

Indeed, if we trim $e_2$ in $J$ and remove the vertex $v_2$ in $J$, we obtain exactly the cubic semi-graph induced by $V(H_{n+1}) \setminus V(Y_n)$, for any integer $n \ge 1$. Furthermore, the semi-graph $Y_n$ is obtained from $H_n$ by splitting the edge $\pi$ and removing the vertex $x$. Consequently, $H_{n+1}\in H_n \ast J$.

For other methods to investigate cyclic edge-connectivity in cubic graphs we refer the interested reader to \cite{jozef,erik}.

\begin{figure}[h]
    \centering
    \includegraphics[width=1\textwidth]{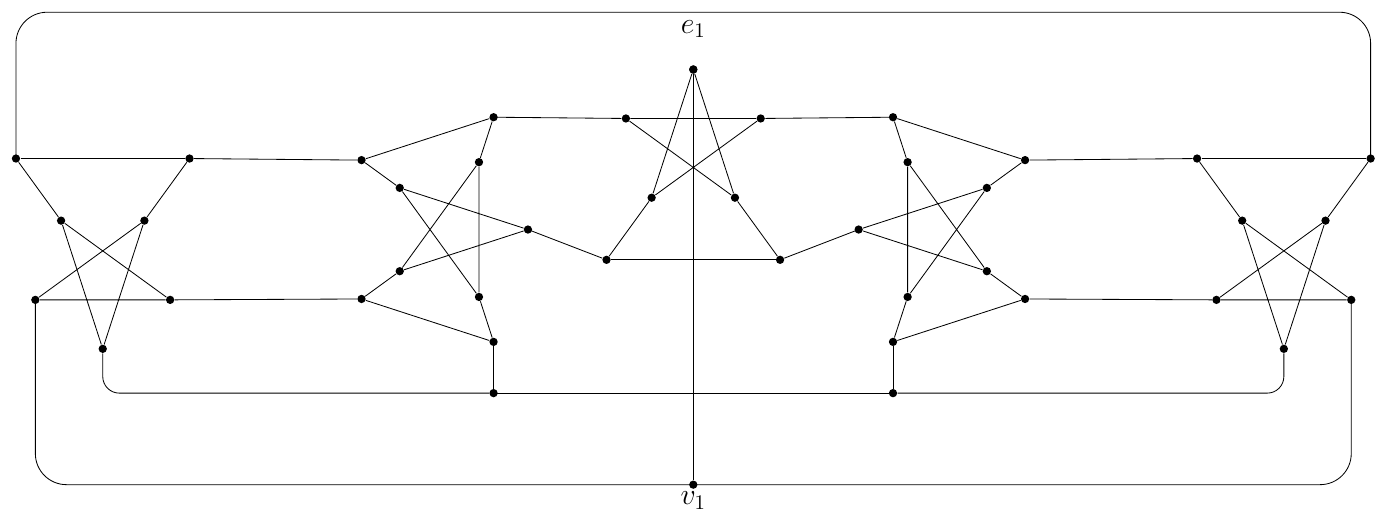} 
    \caption{The graph $H_1$.}
    \label{fig:H_1}
\end{figure}

\begin{figure}[h]
    \centering
    \includegraphics[width=1\textwidth]{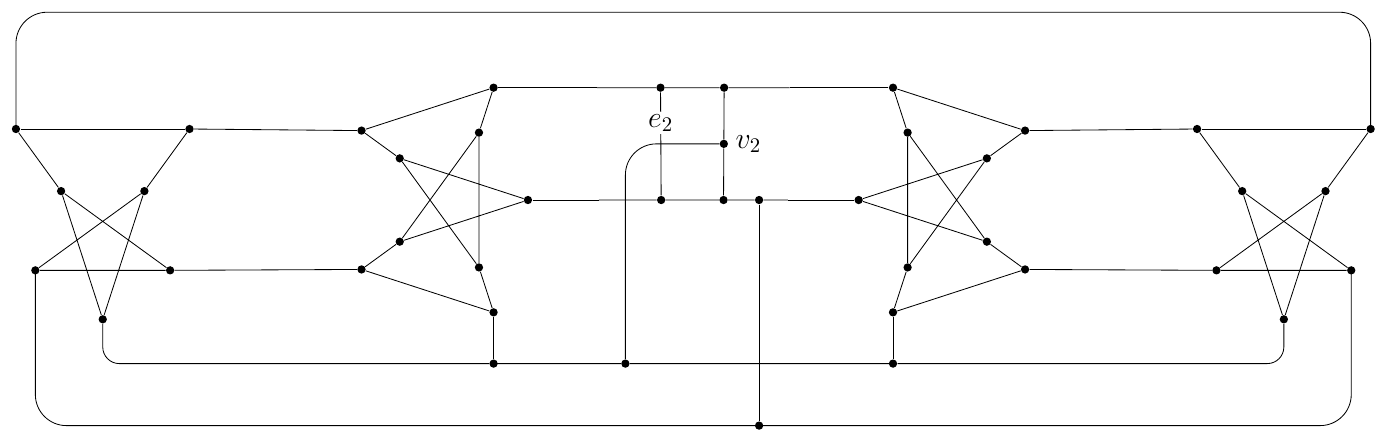} 
    \caption{The graph $J$.}
    \label{fig:graph_J}
\end{figure}

\begin{lemma} \label{lem: connectivity H_1}
$H_1$ and $J$ are cyclically $5$-edge-connected graphs.
\end{lemma}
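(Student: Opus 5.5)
We would prove the statement by recognising both $H_1$ and $J$ as members of sets of the form $G_1\ast G_2$ with $G_1,G_2$ cyclically $5$-edge-connected. Then Lemma~\ref{lem:main} applies directly, and essentially no new cut analysis is needed. The natural building blocks are copies of the Petersen graph $P$, which is cyclically $5$-edge-connected, since $H_1$ and $J$ are assembled from $M$, $N$ and $Z$, all of which are obtained from $P$ by deleting vertices and trimming edges.

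The first step is to decompose $Z$. By construction, $Z$ is $M$ and $N$ glued along two edges, plus one extra vertex joining $e_5$ and $f_5$. Adding a vertex $u$ to $M$ that absorbs two of its semi-edges (say $e_3,e_4$) gives a semi-graph $M^+$ with three semi-edges. Attaching to $M^+$ a vertex incident to $e_1,e_2,e_5$ would produce a graph with two extra vertices relative to $P$ minus a path. The cleaner route is to note that $N = P - v - e$ with $d_P(v,e)=2$. So $N$ together with $M$ plus one added vertex is exactly a graph of $P\ast P'$ for a suitable choice $P' = P$ with $v_1$ and $e_1$ at distance $\ge 3$. In $P$ we need $d_P(v_1,e_1)\ge 3$, but the diameter of $P$ is $2$, so no such pair exists.

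Lemma~\ref{lem:main} therefore cannot be applied with $G_1=P$. We would apply it in stages instead, with the larger piece as $G_1$. First we show that a suitable intermediate graph, for instance $Q\in P\ast P$ (using $d_P(v_2,e_2)=2$ for both factors after re-reading the lemma's proof, or a direct small check), is cyclically $5$-edge-connected. Then we build $H_1$ and $J$ as members of $Q\ast P$, where $Q$ is large enough to contain a vertex--edge pair at distance $\ge 3$. For this we identify, in Figures~\ref{fig:H_1} and~\ref{fig:graph_J}, a $5$-edge-cut $\partial(X)$ such that both sides, after closing up (adding a vertex on three of the cut edges and merging the other two), become $P$ or $Q$.

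The main obstacle is the base case $Q\in P\ast P$, because of the distance condition on $G_1$. Here we would simply verify cyclic $5$-edge-connectivity by hand, following the proof structure of Lemma~\ref{lem:main}. Claims~\ref{claim1} and~\ref{claim2} only used $d\ge 2$; the hypothesis $d\ge 3$ enters only in case $(b)$. In $P$, case $(b)$ is excluded directly: it would require a specific configuration of a tree side with few vertices, and that configuration can be checked against the girth $5$ and the explicit structure of $P$.

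If this staged decomposition does not match the figures, the fallback is a direct argument. We would take a minimal cyclic cut of size at most $4$ and use that every cycle in these graphs has length at least $5$. Each block $M$, $N$ carries the structure of $P$, so a cut meeting a block in a nontrivial way can be pulled back to a cyclic cut of $P$ of size less than $5$, which is impossible. This reduces the argument to cuts separating blocks, whose sizes can be read off from the figures as at least $5$.
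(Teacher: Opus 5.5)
Your main route does not work as stated. A graph in $G_1\ast G_2$ has $|V(G_1)|+|V(G_2)|-2$ vertices, so every member of $Q\ast P$ with $Q\in P\ast P$ has $26$ vertices, whereas $|V(H_1)|=|V(J)|=42$. You would therefore need further levels, with non-Petersen factors on $26$ or $34$ vertices.

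You exhibit none of the required $5$-edge-cuts or closures: which three semi-edges meet $v_1$, and which two form $e_1$. You also verify neither $d(v_1,e_1)\ge 3$ nor the cyclic $5$-edge-connectivity of the intermediate closures, and the latter is the same kind of statement you are trying to prove. Note that a copy of $M$ alone can never be split off, since its closure is a cubic graph on $8$ vertices and so has a cyclic edge-cut of size at most $4$.

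Your base-case claim is also false in general. In $P$ take $y_1\sim v$, $y_2\sim y_1$ with $y_2\neq v$, and $e=y_2z$ with $z\neq y_1$. Then $d_P(v,e)=2$, and in $P-v-e$ the adjacent pair $\{y_1,y_2\}$ carries exactly two semi-edges. Do the same in a second copy of $P$ and join $y_1y_1'$ and $y_2y_2'$. This yields a member of $P\ast P$ containing the $4$-cycle $y_1y_2y_2'y_1'$, hence a cyclic $4$-edge-cut. This is exactly case $(b)$ of Lemma~\ref{lem:main}, and girth $5$ of $P$ does not exclude it. Whether a given $Q$ works depends on the connecting edges, which are dictated by $H_1$ and $J$, not chosen by you.

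The fallback fails at its key step. Suppose $\partial_G(X)$, with $|\partial_G(X)|\le 4$, splits a copy $B$ of $M$ or $N$ into $S=B\cap X$ and $B\setminus X$. In $P$, the cut around $S$ (or around $S$ together with the deleted vertices) consists of the edges of $\partial_G(X)$ inside $B$, plus the $P$-edges corresponding to the semi-edges of $B$ on one side, whether or not those are cut in $G$. This cut can have size $5$ or $6$, and the relevant side need not contain a cycle, so $P$ alone yields no contradiction. Controlling this requires following the cut through the neighbouring blocks. That is the kind of case analysis done in Lemma~\ref{lem:main}, where the distance hypotheses are essential. Likewise, ``cuts separating blocks can be read off from the figures'' is itself the finite check.

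The paper's proof is nothing more than a direct verification of the two explicit $42$-vertex graphs in Figures~\ref{fig:H_1} and~\ref{fig:graph_J}. Your proposal ends up relying on that same check without performing it, and the shortcut you offer in its place is not valid.
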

\begin{proof}
It is not difficult to check that $H_1$ and $J$ are cyclically $5$-edge-connected cubic graphs (see Figure~\ref{fig:H_1} and Figure~\ref{fig:graph_J}).
\end{proof}

We are now ready to prove Theorem \ref{thm:main}.

\begin{proof}[Proof of Theorem \ref{thm:main}]
By Theorem \ref{thm:res_flow_res_Hn}, $r_f(H_n)=n$ and $r(H_n)=2$ for $n \in \mathbb{Z}^+$. We prove the cyclic connectivity by induction on $n$. According to Lemma \ref{lem: connectivity H_1}, the base case $H_1$ is cyclically $5$-edge-connected.

Assume now that $H_n$ is cyclically $5$-edge-connected. By Lemma \ref{lem: connectivity H_1}, the graph $J$ is also cyclically $5$-edge-connected. We observe that $d_J(e_2, v_2) = 2$ and $d_{H_n}(\pi, x) \ge 3$. Since $H_{n+1} \in H_n \ast J$, by Lemma \ref{lem:main} it follows that $H_{n+1}$ is cyclically $5$-edge-connected.
\end{proof}

\vspace{.5cm}
\textbf{Acknowledgments.} The second and the third author are members of INdAM-GNSAGA.

\bibliographystyle{plain}
\bibliography{references}

\end{document}